\newenvironment{enumerate*}{
\begin{enumerate}[{\rm (i)}]
  \setlength{\itemsep}{3.5pt}
  \setlength{\parskip}{0pt}
}{\end{enumerate}}
\newenvironment{enumerate!}{
\begin{enumerate}[{\rm I.}]
  \setlength{\itemsep}{3.5pt}
  \setlength{\parskip}{0pt}
}{\end{enumerate}}
\renewcommand*{\thefootnote}{\arabic{footnote}}
\newtheorem{theorem}{Theorem}[section]
\newtheorem{lemma}[theorem]{Lemma}
\newtheorem{proposition}[theorem]{Proposition}
\newtheorem{corollary}[theorem]{Corollary} 
\newtheorem{definition}[theorem]{Definition}
\newtheorem{example}[theorem]{Example}
\newtheorem{remark}[theorem]{Remark}
\newtheorem*{acknowledgements}{Acknowledgements}
\newtheorem*{thmA}{Theorem A}
\newtheorem*{corA}{Corollary A}
\newtheorem*{corB}{Corollary B}
\newtheorem*{propA}{Proposition A}
\newtheorem{conj}[theorem]{Conjecture}
\numberwithin{equation}{section}
\titleformat*{\section}{\large \bfseries}
\titleformat*{\subsection}{\it}
\titleformat*{\subsubsection}{\large\bfseries}
\titleformat*{\paragraph}{\large\bfseries}
\titleformat*{\subparagraph}{\large\bfseries}
\begin{document}
\title{Group-graded rings satisfying the strong rank condition} 
 
\author{
{\sc Peter Kropholler}\\
{\sc Karl Lorensen}
}

\maketitle
\begin{abstract} A ring $R$ satisfies the {\it strong rank condition} (SRC) if, for every natural number $n$, the free $R$-submodules of $R^n$ all have rank $\leq n$. Let $G$ be a group and $R$ a ring strongly graded by $G$ such that the base ring $R_1$ is a domain. Using an argument originated by Laurent Bartholdi for studying cellular automata, we prove that $R$ satisfies SRC if and only if $R_1$ satisfies SRC and $G$ is amenable. The special case of this result for group rings allows us to prove a characterization of amenability involving the group von Neumann algebra that was conjectured by Wolfgang L\"uck.  In addition, we include two applications to the study of group rings and their modules. 
\vspace{20pt}

\noindent {\bf Mathematics Subject Classification (2010)}:  Primary: 16W50, 16P99, 20F65, 43A07;  Secondary: 16P40, 16U20, 20F16.

\vspace{10pt}

\noindent {\bf Keywords}:  group-graded ring, strong rank condition, amenable group.
\end{abstract}
\let\thefootnote\relax

\vspace{10pt}

Following T.Y. Lam \cite{lam}, we say that a ring $R$ satisfies the {\it strong rank condition} (SRC) if, for every natural number $n$, there is no right $R$-module monomorphism $R^{n+1}\to R^n$. Replacing ``right $R$-module" with ``left $R$-module" in this definition yields the {\it left strong rank condition} (LSRC). As observed in \cite{lam}, these two conditions are {\it not} equivalent. 

This paper is concerned with the significance of SRC for the study of the property of amenability for groups. That these two properties are closely related came to light in two papers on linear cellular automata. First, T. Ceccherini-Silberstein and M. Coornaert's principal result in \cite{CSC} implied that, for a group $G$ and field $K$, the group ring $KG$ satisfies (L)SRC if $G$ is amenable. Second, in the course of proving his main result in \cite{bartholdi}, L. Bartholdi established the converse of this statement about group rings. 

In the present note, we explore
the connection between SRC and group-amenability further by examining rings more general than group rings. Our interest is in a ring $R$ that is {\it graded by a group} $G$; that is, the underlying additive group of $R$ is a direct sum of a family of abelian subgroups $\{R_g:g\in G\}$ such that $R_gR_h\subseteq R_{gh}$ for all $g,h\in G$. In such a ring, it is always the case that $R_1$ is a subring, called the {\it base ring}
of $R$ (see Proposition 1.1(i)).  If the stronger condition $R_gR_h=R_{gh}$ holds for all $g,h\in G$, then $R$ is said to be {\it strongly graded.} Thus, if $S$ is a ring and $G$ a group, then the group ring $SG$, or, more generally, any crossed product $S\ast G$, is an example of a ring strongly graded by $G$ with base ring $S$.

Our aim is to prove 

\begin{thmA} Let $G$ be a group and $R$ a ring strongly graded by $G$ such that $R_1$ is a domain.
Then the following two statements are equivalent. 
\begin{enumerate*}
\item $R$ satisfies {\rm (L)SRC}.
\item $R_1$ satisfies {\rm (L)SRC} and $G$ is amenable. 
\end{enumerate*}
\end{thmA}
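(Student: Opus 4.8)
The plan is to establish the equivalence by proving the two implications separately, and to organize everything around the amenability of $G$ expressed via Følner sets.

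First I would treat the direction $(ii) \Rightarrow (i)$. Assuming $R_1$ satisfies SRC and $G$ is amenable, I would show $R$ satisfies SRC. The key observation is that a strongly graded ring $R$ with base $R_1$ is a free $R_1$-module, and more importantly, each homogeneous component $R_g$ is an invertible $R_1$-bimodule of rank one (this follows from strong grading: $R_g R_{g^{-1}} = R_1$). The plan is to adapt the Ceccherini-Silberstein--Coornaert argument, attributed in the excerpt to cellular automata. Suppose toward a contradiction that there is a monomorphism $\varphi\colon R^{n+1}\to R^n$. I would restrict attention to finitely many homogeneous components: the images of the standard basis vectors involve elements supported on a finite subset $S\subseteq G$. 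Using a Følner set $F$ for $G$ that is large relative to $S$, I would build an $R_1$-module monomorphism between free $R_1$-modules whose ranks violate SRC for $R_1$. Concretely, the restriction of $\varphi$ to the $R_1$-submodule of $R^{n+1}$ supported on $F$ maps into the submodule of $R^n$ supported on the slightly larger set $FS$, and the Følner condition forces $(n+1)|F| \leq n|FS|$ with $|FS|/|F|$ arbitrarily close to $1$, a contradiction for $F$ large enough. The domain hypothesis on $R_1$ enters to guarantee that the relevant $R_1$-module maps genuinely respect rank.

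Next I would handle $(i) \Rightarrow (ii)$, which I expect splits naturally. That $R_1$ satisfies SRC whenever $R$ does should follow from a descent argument: a monomorphism $R_1^{n+1} \to R_1^n$ would induce, by applying $R \otimes_{R_1} -$ (extension of scalars along the faithfully flat inclusion $R_1 \hookrightarrow R$, flat because $R$ is free over $R_1$), a monomorphism $R^{n+1} \to R^n$, contradicting SRC for $R$. The more substantial half is showing that SRC for $R$ forces $G$ to be amenable; this is Bartholdi's contribution in the group-ring case, and I would reverse the Følner argument. If $G$ is non-amenable, it fails the Følner condition, so there is a finite $S \subseteq G$ and $\varepsilon > 0$ with $|FS| \geq (1+\varepsilon)|F|$ for every finite $F$. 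The goal is to manufacture from such an isoperimetric expansion an explicit $R$-module monomorphism $R^{n+1} \to R^n$ for suitable $n$, witnessing the failure of SRC.

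The hard part will be this last construction. In Bartholdi's original argument for $KG$ the matrix entries come from sums of group elements in $S$, and injectivity is verified by a leading-term or support argument using that $K$ is a field (hence a domain) so that cancellation is controlled. The obstacle in the graded setting is that the homogeneous components $R_g$ are merely rank-one $R_1$-bimodules rather than copies of the base ring, so the natural ``group element'' coefficients must be replaced by suitable units or generators of $R_g$, and one must check injectivity over a noncommutative domain $R_1$ rather than a field. I would expect the domain hypothesis on $R_1$ to be exactly what rescues the support/leading-term argument, letting one conclude that a nonzero input yields nonzero output by examining the component of the image supported on an extremal element of $G$ (one of minimal or maximal position with respect to a total order refining the expansion structure). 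Finally, the (L)SRC version follows by a symmetric argument applied to left modules, or by invoking the opposite ring $R^{\mathrm{op}}$, which is again strongly graded by $G$ with base ring $R_1^{\mathrm{op}}$, itself a domain.
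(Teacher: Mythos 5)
Your overall architecture matches the paper's (Folner argument for (ii)$\Rightarrow$(i), flat descent for the easy half of (i)$\Rightarrow$(ii), Bartholdi's construction for the hard half), but both of the substantive steps have genuine gaps. In (ii)$\Rightarrow$(i), your plan to restrict a monomorphism $R^{n+1}\to R^n$ to the elements supported on a Folner set $F$ and compare ranks of free $R_1$-modules does not go through as stated: the homogeneous components $R_g$ of a strongly graded ring are finitely generated \emph{projective} $R_1$-bimodules, not free ones (the paper's Example 1.2 is built on a non-principal ideal precisely to make this point), so the $F$-supported part of $R^{n+1}$ is not a free $R_1$-module of rank $(n+1)|F|$ and the inequality $(n+1)|F|\le n|SF|$ has no meaning. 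The paper avoids this by working with the linear-system characterization of SRC (its Proposition 1.3) and multiplying each coefficient on both sides by non-zero-divisors $b_{g^{-1}}\in R_{g^{-1}}$ and $b_f\in R_f$ so that the entries of the enlarged system land in $R_1$ itself; this also explains why the hypothesis is only that each $R_g$ contain a non-zero-divisor.

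The more serious gap is in the hard direction. Your proposed injectivity mechanism --- examining the component of the image supported on an extremal element ``with respect to a total order refining the expansion structure'' --- is not available: a nonamenable group (e.g.\ a free group) carries no order compatible with the multiplication in the way a leading-term argument requires, and Bartholdi's proof is not of this type. The actual argument needs three ingredients you have not supplied: (1) a quantitative failure of Folner, $|SF|>(1+\log|S|)|F|$ for all finite $F$; (2) a purely combinatorial covering lemma producing a finite set $Y$ and subsets $X_s\subseteq Y$ with $|\bigcup_s X_s|=|Y|-1$ and a lower bound on $|X_s\setminus\bigcup_{t\in T\setminus\{s\}}X_t|$; and (3) generic matrices $\alpha_s$ over a finite extension $L$ of the prime field $K$, chosen so that certain determinants are simultaneously nonzero. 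The map witnessing the failure of SRC is then $\Theta=\sum_s\alpha_s\otimes\beta_s$ on $LY\otimes_KR$, where $\beta_s$ is left multiplication by a nonzero $b_s\in R_s$ (injective by the domain hypothesis via Proposition 1.1(vi)); injectivity of $\Theta$ is checked on the component indexed by an element $f_0$ of the support of the input selected by a counting argument, not by an ordering. You also do not address where the central field $K$ comes from when $R_1$ is merely a domain: the paper must first replace $R$ by $R\otimes_ZK$ with $Z$ the prime subring and $K$ its field of fractions, using $R_1$-torsion-freeness of $R$, before the tensor construction makes sense. Finally, a small point: $R$ is projective, not necessarily free, over $R_1$; flatness is what the descent step needs and is what Proposition 1.1(v) delivers.
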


We obtain the implication (ii)$\implies$(i) in Theorem A as a special case of the following more general assertion, which we prove with the aid of E. F\o lner's \cite{folner} characterization of amenability. 

\begin{propA} Let $G$ be an amenable group and $R$ a ring graded by $G$ such that, for each $g\in G$, $R_g$ contains an element
that is not a (right) left zero divisor. If $R_1$ satisfies {\rm (L)SRC}, then $R$ satisfies {\rm (L)SRC}.
\end{propA}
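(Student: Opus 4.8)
The plan is to argue by contradiction: assuming $R$ fails SRC, I will use a F\o lner set to manufacture an injection between free right $R_1$-modules whose source has strictly larger rank than its target, contradicting SRC for $R_1$. I treat the ``right'' case (SRC, right modules); the LSRC case is symmetric after interchanging left and right throughout and using non-right zero divisors. First I record the standard matrix reformulation: a right $R$-module homomorphism $R^{n+1}\to R^{n}$ is precisely left multiplication $v\mapsto Av$ by an $n\times(n+1)$ matrix $A=(a_{ij})$ over $R$ on column vectors, and it is injective exactly when $Av=0$ forces $v=0$. So if $R$ fails SRC, I fix such an injective $A$, decompose each entry as $a_{ij}=\sum_{s}(a_{ij})_s$ with $(a_{ij})_s\in R_s$, and let $S$ be the finite set of all $s\in G$ that occur, enlarged so that $1\in S$.

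The core is a ``sandwich'' that uses the zero-divisor hypothesis twice. For a finite $F\subseteq G$ with $1\in F$, I view the column vectors whose coordinates lie in $\bigoplus_{f\in F}R_f$ as a right $R_1$-submodule $M_F\subseteq R^{n+1}$; since $A$ has support in $S$, the grading relation $R_sR_f\subseteq R_{sf}$ gives $A(M_F)\subseteq N_{SF}$, where $N_{SF}=\bigoplus_{g\in SF}R_g^{\,n}$. For the lower bound on the source, for each $f$ I pick $u_f\in R_f$ that is not a left zero divisor; then $r\mapsto u_fr$ embeds $R_1$ as a free rank-one right $R_1$-submodule $u_fR_1\cong R_1$ of $R_f$, so $P_F:=\bigoplus_{f\in F}(u_fR_1)^{n+1}\subseteq M_F$ is free of rank $(n+1)|F|$. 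For the upper bound on the target, for each $g$ I pick $u_{g^{-1}}\in R_{g^{-1}}$ that is not a left zero divisor; then $x\mapsto u_{g^{-1}}x$ is an injective right $R_1$-linear map $R_g\to R_{g^{-1}g}=R_1$, and assembling these embeds $N_{SF}\hookrightarrow R_1^{\,n|SF|}$.

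Composing $P_F\xrightarrow{A}N_{SF}\hookrightarrow R_1^{\,n|SF|}$ yields an injective right $R_1$-module map $R_1^{(n+1)|F|}\hookrightarrow R_1^{\,n|SF|}$. It remains to choose $F$ with $(n+1)|F|>n|SF|$: since $1\in S$ we have $F\subseteq SF$, so F\o lner's criterion for the amenable group $G$ supplies a finite nonempty $F$ with $|SF|-|F|<\tfrac{1}{2n}|F|$, whence $n|SF|<(n+\tfrac12)|F|<(n+1)|F|$. Setting $m=n|SF|$ and restricting the injection to a rank-$(m+1)$ free summand of the source then produces an injection $R_1^{\,m+1}\hookrightarrow R_1^{\,m}$, contradicting SRC for $R_1$. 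Hence $R$ satisfies SRC.

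I expect the main obstacle to be exactly the point the hypothesis is designed to overcome: since $R$ is only graded, not strongly graded, the homogeneous components $R_g$ need not be free, or even finitely generated, as $R_1$-modules, so one cannot simply compare the naive $R_1$-ranks of $M_F$ and $N_{SF}$. The non-zero-divisor assumption is what repairs this, and the delicate part is that it must be used to push in two opposite directions at once: it supplies a free rank-one submodule $u_fR_1\cong R_1$ inside each $R_f$ (inflating the source) and, simultaneously, an $R_1$-linear embedding $R_g\hookrightarrow R_1$ by left multiplication by a non-zero-divisor drawn from $R_{g^{-1}}$ (deflating the target). Verifying that the resulting F\o lner ratio beats the fixed factor $(n+1)/n$ is the crux; the remainder is bookkeeping with supports.
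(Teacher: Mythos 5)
Your proof is correct and is essentially the paper's argument run in the contrapositive: your two left-multiplication maps $r\mapsto u_f r$ (inflating the source) and $x\mapsto u_{g^{-1}}x$ (deflating the target) are exactly the factors $b_f$ and $b_{g^{-1}}$ that the paper sandwiches around the coefficients to form the auxiliary $R_1$-system with entries $b_{g^{-1}}(a_{ij})_{gf^{-1}}b_f$, and the same F\o lner count ($m|SF|<n|F|$) closes the argument. The only difference is packaging---module monomorphisms rather than the linear-system criterion---and Proposition 1.3 renders the two formulations equivalent.
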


The implication (i)$\implies$(ii) in Theorem A is a generalization of Bartholdi's result on group rings, and we prove it by extending his argument to our setting.  

We present three applications of Theorem A, drawn from separate areas of inquiry within group theory. First, we
deduce the following characterization of amenability, conjectured by W. L\"uck [{\bf 12}, Conjecture 6.48].   
For the statement of this corollary, we employ the notation $\mathcal N(G)$ for the group von Neumann algebra of a group $G$ [{\bf 12}, Definition 1.1].

\begin{corA}
A group $G$ is amenable if and only if, for each $\mathbb C G$-module $M$ and each $p\ge1$, we have

$$\dim_{\mathcal N(G)}\left(
\mathrm{Tor}_p^{\mathbb C G}(M,\mathcal N(G))\right)=0.$$
\end{corA}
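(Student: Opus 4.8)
The plan is to treat the two implications separately, relying on Theorem~A only for the deeper direction. The forward implication---that amenability of $G$ forces $\dim_{\mathcal N(G)}\mathrm{Tor}_p^{\mathbb C G}(M,\mathcal N(G))=0$ for every module $M$ and every $p\ge 1$---is precisely the dimension-flatness of $\mathcal N(G)$ over $\mathbb C G$, a theorem of L\"uck [{\bf 12}, Theorem 6.37], which I would simply invoke. All the new content lies in the converse (the content of L\"uck's conjecture), and here Theorem~A is the engine.

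For the converse I would argue by contraposition, producing from non-amenability a \emph{single} module witnessing a positive Tor-dimension. Since $\mathbb C G$ is strongly graded by $G$ with base ring $\mathbb C$, a field, which is a domain and trivially satisfies {\rm (L)SRC}, Theorem~A applies and says that $G$ fails to be amenable exactly when $\mathbb C G$ fails {\rm (L)SRC}. Assuming $G$ non-amenable, I therefore obtain a natural number $n$ together with a monomorphism $\phi\colon(\mathbb C G)^{n+1}\hookrightarrow(\mathbb C G)^n$ of $\mathbb C G$-modules (taken on the side matching the chosen version of the rank condition).

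The key construction is to set $M=\operatorname{coker}\phi$, so that
$$0\to(\mathbb C G)^{n+1}\xrightarrow{\ \phi\ }(\mathbb C G)^n\to M\to 0$$
is a free resolution of $M$ of length one. Applying $-\otimes_{\mathbb C G}\mathcal N(G)$ and reading off homology identifies $\mathrm{Tor}_1^{\mathbb C G}(M,\mathcal N(G))$ with $K:=\ker\bigl(\phi\otimes 1\colon \mathcal N(G)^{n+1}\to\mathcal N(G)^{n}\bigr)$, while $\mathrm{Tor}_p^{\mathbb C G}(M,\mathcal N(G))=0$ for $p\ge 2$. I then invoke the standard properties of L\"uck's dimension function: its normalization $\dim_{\mathcal N(G)}\mathcal N(G)^{k}=k$, its additivity on short exact sequences, and its monotonicity under submodules. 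Writing $I$ for the image of $\phi\otimes 1$, additivity applied to $0\to K\to\mathcal N(G)^{n+1}\to I\to 0$ gives $\dim K+\dim I=n+1$, whereas $I\subseteq\mathcal N(G)^{n}$ yields $\dim I\le n$ by monotonicity; hence $\dim_{\mathcal N(G)}\mathrm{Tor}_1^{\mathbb C G}(M,\mathcal N(G))=\dim K\ge 1>0$, contradicting the hypothesis and completing the contrapositive.

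The conceptual core---turning a failure of {\rm SRC} into a positive-dimensional kernel over $\mathcal N(G)$---is brief once Theorem~A is available, and the positivity estimate is an immediate consequence of the rank mismatch $n+1>n$. I expect the only genuinely delicate points to be bookkeeping ones: keeping the handedness of $M$ and of the $\mathbb C G$--$\mathcal N(G)$-bimodule $\mathcal N(G)$ consistent with the version of the rank condition supplied by Theorem~A, and ensuring that the dimension function used is L\"uck's extension to arbitrary (algebraic) $\mathcal N(G)$-modules, for which additivity on the exact sequences above holds verbatim. Thus the main obstacle is not in the argument itself but in correctly importing the dimension-flatness theorem and the formal properties of $\dim_{\mathcal N(G)}$.
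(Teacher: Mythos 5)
Your proposal is correct and follows essentially the same route as the paper: both directions are handled identically (L\"uck's dimension-flatness theorem for the forward implication, and for the converse the contrapositive via Theorem~A, taking $M=\operatorname{coker}$ of the embedding $(\mathbb C G)^{n+1}\hookrightarrow(\mathbb C G)^n$ and tensoring with $\mathcal N(G)$). The only difference is that you spell out the additivity/monotonicity argument giving $\dim_{\mathcal N(G)}\mathrm{Tor}_1\ge 1$, which the paper leaves implicit.
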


Our second corollary is 

\begin{corB} Let $G$ be a group and $S$ a domain.
If $SG$ is either right or left Noetherian, then $G$ is amenable and every subgroup of $G$ is finitely generated.
\end{corB}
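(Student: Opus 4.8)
The plan is to derive the two conclusions by independent arguments: amenability comes straight out of Theorem A, while the maximal condition on subgroups is a classical chain argument on augmentation-type ideals.

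First I would observe that $SG$ is strongly graded by $G$ with base ring $R_1 = S$, which is a domain by hypothesis, so Theorem A applies with $R = SG$. Next I would invoke the standard fact that a right Noetherian ring satisfies SRC and a left Noetherian ring satisfies LSRC (see \cite{lam}); the quickest route is via uniform (Goldie) dimension: a right Noetherian ring has finite right uniform dimension, and since uniform dimension is additive over finite direct sums and monotone under submodules, any embedding $SG^{m}\hookrightarrow SG^{n}$ forces $m\le n$, which is exactly SRC. Thus $SG$ satisfies (L)SRC, and the implication (i)$\Rightarrow$(ii) of Theorem A immediately yields that $G$ is amenable. Note that the domain hypothesis is used here only to license Theorem A; the Noetherian-implies-SRC step is general.

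For the statement about subgroups I would use the equivalence between ``every subgroup of $G$ is finitely generated'' and the maximal condition (ACC) on subgroups, the nontrivial direction being that the union of an ascending chain, if finitely generated, already lies in a term of the chain. Suppose the maximal condition fails, so there is a strictly ascending chain $H_1 < H_2 < \cdots$ of subgroups. For each $n$ let $I_n$ be the right ideal of $SG$ generated by $\{h-1 : h\in H_n\}$, and identify $SG/I_n$ with the permutation module $S[H_n\backslash G]$ on the right cosets via $g\mapsto H_n g$, a right $SG$-module. The natural coarsening maps give $I_n\subseteq I_{n+1}$, and for $h\in H_{n+1}\setminus H_n$ the element $h-1$ lies in $I_{n+1}$ but not in $I_n$, since its image $H_n h - H_n$ in $S[H_n\backslash G]$ is nonzero (using only $S\neq 0$). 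Hence $I_1 < I_2 < \cdots$ is a strictly ascending chain of right ideals, contradicting right Noetherianity. The left Noetherian case is handled identically after replacing right ideals, right cosets, and SRC by their left counterparts, so a single hypothesis (right \emph{or} left Noetherian) drives both conclusions.

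The essential content is already packaged in Theorem A, so the remaining steps are routine; the only points demanding care are the clean citation of the implication Noetherian $\Rightarrow$ (L)SRC and the verification that the ideals $I_n$ strictly increase, which rests entirely on the isomorphism $SG/I_n \cong S[H_n\backslash G]$. The main obstacle, such as it is, is bookkeeping: keeping the handedness consistent throughout so that the ``right'' hypothesis is paired with SRC and right cosets, and the ``left'' hypothesis with LSRC and left cosets.
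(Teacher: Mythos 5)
Your proposal is correct and follows essentially the same route as the paper: amenability via Noetherian $\Rightarrow$ (L)SRC plus Theorem A, and the subgroup condition via the order-preserving assignment $H\mapsto I_H$, where your ideal generated by $\{h-1: h\in H\}$ coincides with the paper's kernel of $SG\to S[H\backslash G]$. The only cosmetic difference is that you phrase the second step as a chain-and-contradiction argument rather than directly transporting the maximal condition along the injection.
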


Corollary B is relevant to the longstanding conjecture that, if $G$ is a group such that $\mathbb ZG$ is either left or right Noetherian, then $G$ must be virtually polycyclic. The corollary lends further credence to this conjecture, for the only amenable groups known in which every subgroup is finitely generated are those that are virtually polycyclic. 

 Our last application of Theorem A, discussed in \S 4, is to an old question about the structure of finitely generated modules over integral group rings that arises in the work of Philip Hall.  

We conclude the introduction with a comment about a possible extension of Theorem~A in the case of a group ring. 
The principal source of examples of nonamenable groups are groups that possess a noncyclic free subgroup. 
If $G$ is such a group, then it is easy to see that there is a $\mathbb ZG$-module 
monomorphism $(\mathbb ZG)^2\to \mathbb ZG$;*\footnote{*If $H$ is a free subgroup of $G$ of rank two, then the augmentation ideal $IH$ is a free $\mathbb ZH$-module of rank two, which yields a $\mathbb ZH$-module embedding $(\mathbb ZH)^2\to \mathbb ZH$. Tensoring by $\mathbb ZG$, then, gives rise to a $\mathbb ZG$-module embedding $(\mathbb ZG)^2\to \mathbb ZG$.} that is, the failure of SRC occurs already at $n=1$.  Furthermore, the same phenomenon has been observed with the integral group rings of other, more exotic, nonamenable groups (see \cite{ivanov}). As a result, it has been conjectured that this is true for the integral group rings of all nonamenable groups (see [{\bf 1}, Problem 1.1] and [{\bf 2}, Conjecture 4.1]). Formulated in the language of the present paper, the conjecture reads as follows.  

\begin{conj} The following statements are equivalent for a group $G$. 
\begin{enumerate*}
\item $\mathbb ZG$ satisfies {\rm (L)SRC}.
\item There is no (left) right $\mathbb ZG$-module embedding $(\mathbb ZG)^2\to \mathbb ZG$. 
\item $G$ is amenable. 
\end{enumerate*}
\end{conj}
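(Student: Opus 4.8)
The plan is to lean on Theorem~A, which collapses most of the equivalence immediately, and then to isolate the one genuinely hard implication. Since $\mathbb{Z}G$ is strongly graded by $G$ with base ring $R_1=\mathbb{Z}$, and $\mathbb{Z}$, a commutative Noetherian domain, satisfies (L)SRC, Theorem~A delivers the equivalence of (i) and (iii) at once. The implication (i)$\Longrightarrow$(ii) is trivial, since (ii) is precisely the instance $n=1$ of the definition of SRC. Moreover, the anti-automorphism $\sum_g c_g g\mapsto\sum_g c_g g^{-1}$ identifies $\mathbb{Z}G$ with its opposite ring, so the left and right forms of each condition coincide and the parenthetical variants need not be separated. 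Hence everything reduces to the single implication (ii)$\Longrightarrow$(iii); equivalently, using (i)$\iff$(iii) already in hand, I would need to bootstrap the level-one condition (ii) up to full SRC.

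I would pursue this by contraposition: assuming $G$ non-amenable, I would try to construct a right $\mathbb{Z}G$-module embedding $(\mathbb{Z}G)^2\hookrightarrow\mathbb{Z}G$. Unwinding definitions, such an embedding is nothing more than a pair $a_1,a_2\in\mathbb{Z}G$ admitting no nontrivial relation $a_1x_1+a_2x_2=0$ with $x_1,x_2\in\mathbb{Z}G$; that is, $a_1$ and $a_2$ must generate a free right ideal of rank two. When $G$ contains a noncyclic free subgroup $H=\langle x,y\rangle$, the footnote already exhibits such a pair, namely the free generators $x-1,y-1$ of the augmentation ideal $IH$, transported to $\mathbb{Z}G$. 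My hope for the general case would be to feed in non-amenability only through Tarski's theorem: $G$ admits a paradoxical decomposition, a finite partition of $G$ whose pieces can be translated so as to reassemble two disjoint copies of $G$, and I would aim to distill from this duplication an explicit algebraic pair $(a_1,a_2)$.

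The hard part --- and the reason the statement is still only a conjecture --- is exactly this distillation. A paradoxical decomposition is assembled from infinite subsets of $G$ and their translates, so the operators it furnishes are not finitely supported and do not lie in $\mathbb{Z}G$; for free groups the passage to a finitely supported witness is rescued by the clean fact that $IH$ is free, but for non-amenable groups without free subgroups, such as the Tarski monsters and the Ol'shanskii--Adyan examples, no comparable structural shortcut is known. Verifying that a candidate pair $a_1,a_2$ really admits no right-linear relation amounts to controlling the additive cancellation of integer coefficients across all of $G$, and paradoxicality alone does not obviously supply this control. The same obstruction appears from the vantage point of Bartholdi's argument behind Theorem~A: for non-amenable $G$ it produces an embedding $(\mathbb{Z}G)^{n+1}\hookrightarrow(\mathbb{Z}G)^n$ for some $n$ governed by the failure of the F\o lner condition, but it gives no leverage to force $n=1$. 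Sharpening the construction to the first level, uniformly over every non-amenable group, is precisely the crux; so far the required embedding has been produced only in individual cases (see \cite{ivanov}), and a uniform construction remains out of reach.
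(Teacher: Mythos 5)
The statement you were asked to prove is Conjecture~0.1 of the paper, and the paper does \emph{not} prove it --- it is presented precisely because the authors cannot close it. So there is no ``paper's own proof'' to compare against, only the surrounding discussion. Measured against that discussion, your proposal is accurate and extracts everything that is actually within reach: the equivalence (i)$\iff$(iii) is indeed an immediate specialization of Theorem~A to $R=\mathbb{Z}G$ with $R_1=\mathbb{Z}$ a domain satisfying SRC; the implication (i)$\implies$(ii) is the $n=1$ instance of the definition; and the observation that the anti-automorphism $g\mapsto g^{-1}$ identifies $\mathbb{Z}G$ with its opposite ring, collapsing the left and right variants, is correct and consistent with how the paper treats the parenthetical conditions. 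Your reduction of the whole conjecture to the single implication (ii)$\implies$(iii) matches the authors' own framing (their remark that no ring is known for which SRC first fails at some $n>1$ is exactly the statement that (ii)$\implies$(i) is the open content).

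The gap in your argument --- bootstrapping the rank-one condition (ii) up to full SRC, or equivalently manufacturing an embedding $(\mathbb{Z}G)^2\hookrightarrow\mathbb{Z}G$ from non-amenability alone --- is the genuine open problem, not an oversight on your part, and your diagnosis of why the available tools fall short is sound: Bartholdi's construction controls only some $n$ determined by the failure of the F\o lner condition, the free-subgroup trick in the paper's footnote needs an actual free subgroup, and a paradoxical decomposition produces operators that are not finitely supported. One small caution: your reformulation of (ii) as ``$a_1,a_2$ generate a free right ideal of rank two'' is correct, but note that by Proposition~1.5 the equivalence of the $n=1$ condition with full SRC is already known for \emph{domains}; so any counterexample to (i)$\iff$(ii) would have to involve a group ring with zero divisors, which is why the conjecture is intertwined with the Kaplansky-type questions and is unlikely to yield to a purely formal argument. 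In short: your proposal proves what can be proved, correctly identifies what cannot (yet), and should not be mistaken for --- nor penalized as --- a failed proof of a theorem.
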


With regard to (i) and (ii), it is worth pointing out that the authors are not aware of any examples of rings for which the smallest value of $n$ witnessing the failure of SRC is greater than $1$. Nevertheless, there are rings for which conditions similar to SRC fail only in higher dimensions-- for instance, {\it stable finiteness} (see \cite{lam}) and having an {\it invariant basis number} (see \cite{corner}). Hence it seems likely that such examples exist for SRC too (just, we believe, not among group rings). Finally, we remark that, for elementary reasons, any example of this sort cannot be a domain (see Proposition
1.5). 

\begin{acknowledgements}{\rm The authors began work on this paper as participants in the Research in Pairs Program of the {\it Mathematisches Forschungsinstitut Oberwolfach} from March 22 to April 11, 2015. In addition, the project was partially supported by EPSRC Grant EP/N007328/1.

Finally, the first author benefited from conversations he had in the spring of 2017 with Goulnara Arzhantseva and Denis Osin while visiting the Isaac Newton Institute at the University of Cambridge.}
\end{acknowledgements}

\section{Preliminary facts about rings}

In this section, we collect some elementary properties of rings graded by groups, as well as two propositions concerning the strong rank condition for rings. We begin by explaining some standard nomenclature regarding a ring $R$ graded by a group $G$. For each $g\in G$, the additive subgroup $R_g$ is referred to as a {\it homogeneous component}, and any element of $R_g$ is called a {\it homogeneous element}. Moreover, for any $r\in R$ and $g\in G$, we use $r_g$ to denote the image of $r$ in $R_g$ under the projection map. Also, the {\it support} of an element $r\in R$ is defined to be the set $\{g\in G:r_g\neq 0\}$. 

Our first proposition describes six basic properties of group-graded rings.  

\begin{proposition} Let $G$ be a group and $R$ a ring graded by $G$. Then the following statements hold. 
\begin{enumerate*}
\item $R_1$ is a subring of $R$.
\item $R$ is strongly graded if and only if $1\in R_gR_{g^{-1}}$ for every $g\in G$.  
\item $R$ is a crossed product if and only if each homogeneous component contains a unit. 
\item If $R$ is strongly graded, then, for any pair $g,h\in G$, there is an $(R_1,R_1)$-bimodule isomorphism $\phi_{(g,h)}:R_g\otimes_{R_1}R_h\to R_{gh}$ such that $\phi_{(g,h)}(r\otimes s):=rs$ for all $r\in R_g$ and $s\in R_h$. 

\item If $R$ is strongly graded, then, for all $g\in G$, $R_g$ is finitely generated and projective as both a right and left $R_1$-module. 
\item If $R$ is strongly graded and $R_1$ is a domain, then none of the nonzero homogeneous elements of $R$ are zero divisors.   
\end{enumerate*} 
\end{proposition}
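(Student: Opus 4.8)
The plan is to prove the two injectivity statements separately: that left multiplication and right multiplication by a nonzero homogeneous element are each injective. First I would reduce to homogeneous multiplicands. If $r\in R_g$ and $s=\sum_h s_h\in R$ with $s_h\in R_h$, then $rs=\sum_h rs_h$ with $rs_h\in R_{gh}$; since the components $R_{gh}$ are distinct summands of the direct sum $R=\bigoplus_k R_k$, the equation $rs=0$ forces $rs_h=0$ for every $h$, and symmetrically $sr=0$ forces $s_hr=0$. Thus it suffices to show that $rs\neq 0$ whenever $r\in R_g$ and $s\in R_h$ are both nonzero.

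The engine of the argument is an observation that uses strong grading to push a nonzero homogeneous element into the base domain $R_1$. If $0\neq r\in R_g$, then $R_{g^{-1}}r\neq 0$: otherwise $R_{g^{-1}}r=0$ would give $R_1r=(R_gR_{g^{-1}})r=R_g(R_{g^{-1}}r)=0$, whence $r=1\cdot r=0$, a contradiction. Here I use that $R$ is strongly graded, so $R_gR_{g^{-1}}=R_1$, together with the fact that $1\in R_1$ from part (i). The identical reasoning on the other side yields $rR_{g^{-1}}\neq 0$.

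Now suppose $r\in R_g$ and $s\in R_h$ are nonzero but $rs=0$. By the engine I may choose $t\in R_{g^{-1}}$ with $c:=tr\neq 0$; then $c\in R_1$ and $cs=t(rs)=0$. For every $u\in R_{h^{-1}}$ we have $su\in R_1$ and $c(su)=(cs)u=0$, so, since $R_1$ is a domain and $c\neq 0$, we conclude $su=0$. Hence $sR_{h^{-1}}=0$, and therefore $sR_1=s(R_{h^{-1}}R_h)=(sR_{h^{-1}})R_h=0$, forcing $s=s\cdot 1=0$, a contradiction. The case $sr=0$ is handled symmetrically: one picks $t'\in R_{g^{-1}}$ with $rt'\neq 0$ and then multiplies on the left by elements of $R_{h^{-1}}$. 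The only delicate point is the bookkeeping of sides, namely keeping track of whether to invoke $R_gR_{g^{-1}}=R_1$ or $R_{g^{-1}}R_g=R_1$ and on which side to multiply; no deeper input is required. In particular, I expect this argument to rely only on strong grading and the domain hypothesis, using neither part (iv) nor part (v).
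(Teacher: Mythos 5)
Your argument is correct, and it takes a genuinely different route from the paper's. The paper proves (vi) by leaning on parts (iv) and (v): it fixes an \emph{arbitrary} nonzero $t\in R_{g^{-1}}$, uses flatness of the homogeneous components (hence torsion-freeness over the domain $R_1$) together with the isomorphism $R_{g^{-1}}\otimes_{R_1}R_g\cong R_1$ to conclude $t\otimes r\neq 0$ and hence $tr\in R_1\setminus\{0\}$, and then kills $s$ in one stroke from $(tr)s=0$ using that all of $R$ is $R_1$-torsion-free. You instead only need the \emph{existence} of some $t$ with $tr\neq 0$, which you extract directly from $R_gR_{g^{-1}}=R_1$ and $1\in R_1$ (your ``engine''), and you then dispose of $s$ by first reducing to homogeneous components via the direct-sum decomposition and running the engine a second time on $s$ itself. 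The trade-off: your proof is more elementary and self-contained, using nothing beyond the definition of strong grading, the identity element, and the domain hypothesis; the paper's proof is shorter on the page but imports the nontrivial structural facts (iv) and (v) (which it cites from N\u{a}st\u{a}sescu--Van Oystaeyen) and quietly relies on the implication flat $\Rightarrow$ torsion-free over a (possibly noncommutative) domain. Both establish the two-sided statement by symmetric bookkeeping of which of $R_gR_{g^{-1}}=R_1$ or $R_{g^{-1}}R_g=R_1$ is invoked.
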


\begin{proof} 
Statements (i)-(v) are proved in \cite{book}. To establish (vi), let $g\in G$ and $r\in R_g\backslash \{0\}$. We will show that $r$ is not a zero divisor. Let $s\in R$ such that $rs=0$. Take $t\in R_{g^{-1}}\backslash \{0\}$.
By (v), $R_g$ and $R_{g^{-1}}$ are flat, and thus also torsion-free, as $R_1$-modules. Hence, since $R_1$ is a domain, we can conclude $t\otimes r\neq 0$ in $R_{g^{-1}}\otimes_{R_1} R_g$.
Thus (iv) implies $tr\in R_1\backslash \{0\}$. But $(tr)s=0$, and $R$ is $R_1$-torsion-free. Therefore $s=0$ because $R_1$ is a domain. In other words, $r$ is not a left zero divisor. A similar argument shows that $r$ is not a right zero divisor. 
\end{proof}

A rich source of examples of strongly group-graded rings is supplied by crossed products. But there are many other instances of such rings; one is given below.

\begin{example}{\rm 
Let $S$ be a ring and suppose that $I$ is an ideal and $\phi$ is an $(S,S)$-bimodule isomorphism $I\otimes_SI\buildrel\phi\over\to S$. Let $R:=S\oplus I$ and define multiplication in $R$ by $(s,x)(s',x'):=(ss'+\phi(x\otimes x'),xs'+sx')$. This makes $R$ a ring graded by the multiplicative group $\{\pm1\}$ with $R_1=S$ and $R_{-1}=I$. For an explicit instance that is strongly graded but not a crossed product, take $S=\mathbb Z[\sqrt{-5}]$ and $I=(1+\sqrt{-5},\ 3)$. In this case, $I^2$ is the principal ideal generated by $2-\sqrt{-5}$, so we can define $\phi(x\otimes x'):=\frac{xx'}{2-\sqrt{-5}}$. The only units of $R$ are now $(\pm1,0)$, and both lie in $R_1$. Moreover, the strongly graded condition $R_{-1}R_{-1}=R_1$ is witnessed by
$\frac{3(2-\sqrt{-5})+(1+\sqrt{-5})^2}{2-\sqrt{-5}}=1.$}
\end{example}

Our proof in the next section relies on the following alternative characterization of SRC given in \cite{lam}.

\begin{proposition} The following statements are equivalent for a ring $R$. 
\begin{enumerate*}
\item $R$ satisfies {\rm SRC}.
\item Any system of equations

\begin{align*} a_{11}x_1+a_{12}x_2+&\cdots+a_{1n}x_n=0  \\
a_{21}x_1+a_{22}x_2+&\cdots+a_{2n}x_n=0  \\
&\vdots \\
a_{m1}x_1+a_{m2}x_2+&\cdots+a_{mn}x_n=0
\end{align*}
over $R$ such that $m<n$ has a nonzero solution for the unknowns $x_1,\dots, x_n$.  \hfill\(\square\)
\end{enumerate*}  

\end{proposition}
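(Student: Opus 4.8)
The plan is to translate both conditions into statements about right $R$-module homomorphisms between free modules, after which the equivalence falls out from a short padding argument. The first step is to set up the standard dictionary between matrices and homomorphisms. Viewing elements of $R^n$ as columns and letting $e_1,\dots,e_n$ be the standard basis, any right $R$-module homomorphism $f\colon R^n\to R^m$ is determined by the images $f(e_j)=(a_{1j},\dots,a_{mj})^{\mathrm{T}}$; for $x=\sum_j e_jx_j$ with $x_j\in R$ one computes $f(x)=\sum_j f(e_j)x_j$, whose $i$-th coordinate is $\sum_j a_{ij}x_j$. Thus $f$ is precisely left multiplication $x\mapsto Ax$ by the $m\times n$ matrix $A=(a_{ij})$, and conversely every such matrix defines a right $R$-module homomorphism. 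The scalars $x_j$ are placed on the right of the coefficients exactly so that $A(xr)=(Ax)r$, which is what renders this map right $R$-linear; equivalently, the solution set of the homogeneous system with coefficient matrix $A$ is a right $R$-submodule of $R^n$.

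Under this correspondence, a nonzero solution of the system in the statement is nothing but a nonzero element of $\ker f$, so the system has a nonzero solution if and only if $f$ fails to be a monomorphism. Hence condition (ii) says precisely that there is no right $R$-module monomorphism $R^n\to R^m$ whenever $m<n$, while condition (i), SRC, asserts that there is no monomorphism $R^{k+1}\to R^k$ for any $k$. The two conditions are therefore formally very close, differing only in whether one forbids monomorphisms for every drop in rank or only for a drop by one.

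The implication (ii)$\Rightarrow$(i) is then immediate, since SRC is merely the special case $m=k$, $n=k+1$ of condition (ii). For the reverse implication (i)$\Rightarrow$(ii) I would argue contrapositively: suppose some system of $m$ equations in $n$ unknowns with $m<n$ admits only the trivial solution, so that the associated homomorphism $f\colon R^n\to R^m$ is a monomorphism. Since $m\le n-1$, I compose $f$ with the coordinate inclusion $R^m\hookrightarrow R^{n-1}$ (sending a column to its first $m$ entries, padded with zeros), which is plainly a monomorphism. The composite is a monomorphism $R^n\to R^{n-1}$, that is, a monomorphism $R^{(n-1)+1}\to R^{n-1}$, so $R$ fails SRC.

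No serious obstacle is anticipated, as this result is essentially bookkeeping. The only points demanding care are the left/right conventions---ensuring the coefficients sit to the left of the unknowns so that the solution space is a \emph{right} submodule and the map $x\mapsto Ax$ is right $R$-linear---and the simple but essential padding step, which converts an arbitrary rank drop into a drop by exactly one and is precisely what bridges the gap between (ii) and the single-step formulation of SRC in (i).
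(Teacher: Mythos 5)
Your proof is correct and is essentially the standard argument: the paper itself gives no proof of this proposition, citing it directly from Lam's \emph{Lectures on Modules and Rings}, and your matrix--homomorphism dictionary together with the padding step $R^m\hookrightarrow R^{n-1}$ is exactly the reasoning found there. The left/right conventions are handled correctly (coefficients on the left make $x\mapsto Ax$ a right $R$-module map), so there is nothing to add.
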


\begin{remark}{\rm  There is also a version of Proposition 1.3 for LSRC in which the coefficients in the equations appear on the right.} 
\end{remark}

For a domain, there are four other important conditions that are equivalent to SRC, proved in \cite{lam}. 
We will not invoke these in our proofs, but they are interesting to keep in mind, especially in view of the 
relevance of the second property to Conjecture 0.1. 

\begin{proposition} The following statements are equivalent for a domain $R$. 

\begin{enumerate*}
\item $R$ satisfies {\rm (L)SRC}.
\item There is no (left) right $R$-module monomorphism $R^2\to R$.
\item $R$ is a (left) right Ore ring.  
\item $R$ has finite uniform dimension as a (left) right $R$-module.
\item $R$ has uniform dimension $1$ as a (left) right $R$-module. \hfill\(\square\)
\end{enumerate*}  
\end{proposition}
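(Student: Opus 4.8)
The plan is to prove the ``right'' form of the proposition, the ``left'' form being entirely symmetric, and to do so by establishing the cycle of implications
$$\mathrm{(i)}\Rightarrow\mathrm{(ii)}\Rightarrow\mathrm{(v)}\Rightarrow\mathrm{(iii)}\Rightarrow\mathrm{(i)},$$
together with the pair $\mathrm{(v)}\Rightarrow\mathrm{(iv)}\Rightarrow\mathrm{(v)}$ to fold in the fourth statement. Two of these links are purely formal: $\mathrm{(i)}\Rightarrow\mathrm{(ii)}$ is simply the $n=1$ instance of SRC, and $\mathrm{(v)}\Rightarrow\mathrm{(iv)}$ holds because uniform dimension $1$ is finite. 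The equivalence $\mathrm{(iii)}\Leftrightarrow\mathrm{(v)}$ I regard as a translation between two standard notions: for a domain, the right Ore condition asserts precisely that any two nonzero principal right ideals meet in a nonzero element, which is exactly the statement that $R$ is uniform as a right module over itself, i.e.\ has uniform dimension $1$.

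For $\mathrm{(ii)}\Rightarrow\mathrm{(v)}$ I would argue contrapositively. If $R$ fails to be uniform, there are nonzero right ideals meeting in $0$, hence nonzero elements $a,b\in R$ with $aR\cap bR=0$; the map $R^2\to R$, $(r,s)\mapsto ar+bs$, is then a right $R$-module homomorphism with trivial kernel (if $ar+bs=0$ then $ar=-bs\in aR\cap bR=0$, and the domain property forces $r=s=0$), contradicting $\mathrm{(ii)}$. For the remaining formal link $\mathrm{(iv)}\Rightarrow\mathrm{(v)}$, again by contraposition, a single monomorphism $\phi\colon R^2\to R$ can be iterated: setting $\phi_1:=\mathrm{id}_R$ and $\phi_{n+1}(x_1,\dots,x_{n+1}):=\phi(\phi_n(x_1,\dots,x_n),x_{n+1})$ produces monomorphisms $R^n\hookrightarrow R$ for every $n$, so that $R$ has infinite uniform dimension. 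Thus a domain of uniform dimension $>1$ already has infinite uniform dimension, and $\mathrm{(iv)}$ forces uniform dimension $1$.

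The substance of the proposition lies in $\mathrm{(iii)}\Rightarrow\mathrm{(i)}$. Given that $R$ is a right Ore domain, I would form its right division ring of fractions $Q=RS^{-1}$, where $S=R\setminus\{0\}$; the Ore hypothesis is exactly what guarantees that this classical localization exists and is a division ring. Because $Q$ is flat as a left $R$-module, the functor $-\otimes_R Q$ is exact, so any right $R$-module monomorphism $R^{n+1}\to R^n$ would induce a monomorphism $Q^{n+1}\to Q^n$ of right $Q$-vector spaces---impossible over a division ring by comparing dimensions. Hence $R$ satisfies SRC. Alternatively, and more in keeping with Proposition 1.3, one can solve a homogeneous system of fewer equations than unknowns over $Q$ by ordinary linear algebra and then clear to a common right denominator (available by the Ore condition) to recover a nonzero solution over $R$.

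The main obstacle is precisely this step $\mathrm{(iii)}\Rightarrow\mathrm{(i)}$, since it is the only place where the Ore condition does real work: one must invoke the construction of the Ore localization together with either its flatness as a left $R$-module or, equivalently, the common-denominator property needed to descend a solution from $Q$ back to $R$. Everything else reduces to the domain property and elementary manipulations of direct sums of principal ideals. Finally, I would note that the entire argument dualizes verbatim, with right ideals, right Ore, and right uniform dimension replaced by their left counterparts, to yield the ``left'' statements.
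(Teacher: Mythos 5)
Your argument is correct and complete. For this proposition the paper supplies no proof at all --- it simply records the equivalences as ``proved in \cite{lam}'' --- so there is nothing internal to compare against; your write-up is essentially the standard argument from Lam's book. The decomposition is sound: (i)$\Rightarrow$(ii) is the $n=1$ case; (ii)$\Rightarrow$(v) and (iv)$\Rightarrow$(v) both reduce, via the domain hypothesis, to the observation that two nonzero principal right ideals with trivial intersection yield a monomorphism $R^2\to R$ (and iterating such a map gives free submodules of every finite rank, so uniform dimension $>1$ forces infinite uniform dimension); (iii)$\Leftrightarrow$(v) is the correct translation of the right Ore condition for a domain; and (iii)$\Rightarrow$(i) goes through either by flatness of the Ore localization $Q=RS^{-1}$ as a left $R$-module plus dimension counting over the division ring $Q$, or by the common-denominator descent of a solution of a linear system from $Q$ to $R$, which dovetails nicely with the paper's Proposition 1.3. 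The only points worth making explicit in a final version are the two small facts you use implicitly: that $R\neq 0$ (so the uniform dimension is at least $1$), and that $Q$ is flat over $R$ because it is a directed union of the free left $R$-modules $Rs^{-1}$, directedness being exactly the Ore condition.
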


Recall that the {\it uniform dimension} of a module is the maximal number of summands in any submodule that can be expressed as a direct sum (see \cite{lam}). 

\section{Proof of Theorem A((ii)$\implies$(i))}

This section is devoted to the proof of Proposition A; in view of Proposition 1.1(vi), the implication (ii)$\implies$(i) in Theorem A will then follow immediately. First we state F\o lner's \cite{folner} well-known characterization of amenability, which will serve as the basis of our proofs here and in the next section.

\begin{proposition}{\rm (F\o lner)} A group $G$ is amenable if and only if, for every finite subset $S$ of $G$ and every $\epsilon>0$, there is a finite subset $F$ of $G$ such that 
\[
\pushQED{\qed}
|SF|<(1+\epsilon)|F|.\qedhere
\popQED
\]
\end{proposition}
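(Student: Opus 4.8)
The plan is to prove the equivalence against the standard definition of amenability, namely the existence of a left-invariant mean on $\ell^\infty(G)$: a positive linear functional $\mu$ of norm $1$ satisfying $\mu(g\cdot f)=\mu(f)$ for all $g\in G$ and $f\in\ell^\infty(G)$, where $(g\cdot f)(x):=f(g^{-1}x)$. I would establish the two implications separately, the second being the substantive one. Throughout I may assume the finite set $S$ contains $1$, since the condition for $S\cup\{1\}$ implies it for $S$.

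For the implication that the F\o lner condition forces amenability, I would first note that it supplies, for each finite $S\ni 1$ and each $\epsilon>0$, a finite set $F$ for which $|SF|<(1+\epsilon)|F|$ gives $|gF\setminus F|<\epsilon|F|$, and hence $|gF\,\triangle\,F|<2\epsilon|F|$, for every $g\in S$. Running this over an exhausting family of pairs $(S,\epsilon)$ produces a net of nonempty finite sets $F_\alpha$, and I would define means $\mu_\alpha(f):=|F_\alpha|^{-1}\sum_{x\in F_\alpha}f(x)$. Since the set of means is weak-$*$ compact in the dual of $\ell^\infty(G)$, the net $(\mu_\alpha)$ has a weak-$*$ cluster point $\mu$. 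For a fixed $g$ and $f$, the quantity $|\mu_\alpha(g\cdot f)-\mu_\alpha(f)|$ is bounded by $\|f\|_\infty\,|g^{-1}F_\alpha\,\triangle\,F_\alpha|/|F_\alpha|$, which tends to $0$ along the net; hence $\mu$ is left-invariant and $G$ is amenable.

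The reverse implication is where the difficulty lies. Starting from a left-invariant mean $\mu$, I would invoke the fact that the finitely supported probability densities in $\ell^1(G)$ are weak-$*$ dense in the space of means, so $\mu$ is a weak-$*$ limit of a net $(\phi_\alpha)$ with each $\phi_\alpha\ge0$ and $\|\phi_\alpha\|_1=1$. Unravelling the invariance $\mu(g\cdot f)=\mu(f)$ then shows that $g\cdot\phi_\alpha-\phi_\alpha\to0$ in the weak topology of $\ell^1(G)$ for every $g$. The crucial step, and the one I expect to be the main obstacle, is to upgrade this weak convergence to norm convergence: because the weak and norm closures of a convex subset of $\ell^1$ coincide (Mazur's lemma), a Namioka-style convexity argument applied to the product over a prescribed finite $S$ lets me pass to convex combinations $\psi$ of the $\phi_\alpha$ with $\psi\ge0$, $\|\psi\|_1=1$, and $\sum_{g\in S}\|g\cdot\psi-\psi\|_1$ as small as desired.

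Finally, from such a $\psi$ I would extract an honest F\o lner set via the layer-cake decomposition $\psi=\int_0^\infty\chi_{F_t}\,dt$, where $F_t:=\{x:\psi(x)>t\}$. Since $\{g\cdot\psi>t\}=gF_t$, this yields the identities $\|\psi\|_1=\int_0^\infty|F_t|\,dt$ and $\|g\cdot\psi-\psi\|_1=\int_0^\infty|gF_t\,\triangle\,F_t|\,dt$. Summing over $g\in S$ and comparing integrands, an averaging argument produces a level $t$ with $F_t\neq\emptyset$ and $\sum_{g\in S}|gF_t\,\triangle\,F_t|$ smaller than any preassigned multiple of $|F_t|$. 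Setting $F:=F_t$ and using $SF\subseteq F\cup\bigcup_{g\in S}(gF\setminus F)$ together with $|gF\setminus F|=\tfrac12|gF\,\triangle\,F|$ then gives $|SF|<(1+\epsilon)|F|$, which completes the proof.
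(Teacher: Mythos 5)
Your outline is correct, but note that the paper does not prove this statement at all: Proposition 2.1 is quoted as F\o lner's theorem with a citation to his 1955 paper, and the $\qed$ marks it as a known black box. What you have reconstructed is the standard modern functional-analytic proof (due essentially to Day and Namioka, rather than F\o lner's original, much harder combinatorial argument): weak-$*$ compactness of means for the easy direction, and for the hard direction the chain ``invariant mean $\Rightarrow$ weak-$*$ approximation by finitely supported probability densities $\Rightarrow$ weak convergence $g\cdot\phi_\alpha-\phi_\alpha\to 0$ in $\ell^1$ $\Rightarrow$ norm convergence via Mazur applied to the convex image in $\bigoplus_{g\in S}\ell^1(G)$ $\Rightarrow$ F\o lner sets via the layer-cake formula.'' The steps all check out: the reduction to $1\in S$ gives $F\subseteq SF$ and hence $|gF\setminus F|<\epsilon|F|$; the identity $\|g\cdot\psi-\psi\|_1=\int_0^\infty|gF_t\,\triangle\,F_t|\,dt$ is the correct coarea-type identity for characteristic-function layers (using $\{g\cdot\psi>t\}=gF_t$); the averaging over $t$ is sound because levels with $F_t=\emptyset$ contribute zero to both integrals; and $|SF|\le|F|+\tfrac12\sum_{g\in S}|gF\,\triangle\,F|$ closes the loop with $\delta<2\epsilon$. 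Two points you would need to make explicit in a full write-up are the weak-$*$ density of finitely supported probability densities in the set of means (a Hahn--Banach separation argument) and the directedness of your net of pairs $(S,\epsilon)$ so that, for fixed $g$, eventually $g^{-1}\in S_\alpha$, ensuring every cluster point is invariant. In short: the proposal supplies a complete and standard proof where the paper deliberately supplies none, which is perfectly appropriate for a classical result of this kind.
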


\begin{proof}[Proof of Proposition A] We will confine ourselves to the part of the implication involving SRC, the left condition requiring merely a dual version of this argument. Our appoach is to invoke Proposition 1.3. To this end, let

\begin{equation} \sum_{j=1}^n a_{ij}x_j=0\ \ \ \ \ \ \ i=1,\dots, m\end{equation}
be a system of $m$ equations over $R$ with $n$ unknowns $x_1,\dots,x_n$ such that $m<n$.  Exploiting the fact that $R_1$ satisfies SRC, we will produce values of the $x_j$, not all equal to zero, that satisfy (2.1). 

We begin by taking, for each $g\in G$,  an element $b_g\in R_g$ such that $b_g$ is not a left zero divisor. Also, let $S$ be the union of the supports of the coefficients $a_{ij}$. Invoking Proposition 2.1, we can find a finite set $F\subseteq G$ such that $|SF|<\frac{n}{m}|F|$. 
Consider now the linear system

\begin{equation} \sum_{f\in F\ j=1,\dots,n}[b_{g^{-1}}(a_{ij})_{gf^{-1}}b_f]x'_{jf}=0 \ \ \  \mbox{for}\ g\in SF,\ \ i=1,\dots,m       \end{equation}
with $m|SF|$ equations and $n|F|$ unknowns $x'_{jf}$. Since the coefficients $b_{g^{-1}}(a_{ij})_{gf^{-1}}b_f$ are all elements of $R_1$ and $m|SF|<n|F|$, there must be elements $x'_{jf}$ of $R_1$, not all zero, that satisfy (2.2).  

For each $j=1,\dots,n$, set $x_j:=\sum_{f\in F}b_fx'_{jf}$. Then at least one of the $x_j$ is nonzero. Moreover, for every $g\in SF$ and $i=1,\dots,m$,

$$\sum_{f\in F\ j=1,\dots,n}b_{g^{-1}}(a_{ij})_{gf^{-1}}(x_j)_f=0,$$ 
implying

$$\sum_{f\in F\ j=1,\dots,n}(a_{ij})_{gf^{-1}}(x_j)_f=0.$$ In other words,
$\sum_{j=1}^n (a_{ij}x_j)_g=0$
for every $g\in SF$ and $i=1,\dots,m$. Hence, since $SF$ contains the union of the supports of the left sides of all the equations in (2.1), we see that the equations hold. 
\end{proof}

\begin{remark}{\rm The special case of the above argument for $(m,n)=(1,2)$ and $R=KG$ for $K$ a field is well known and appears in the discussion of [{\bf 1}, Problem 1.1].  It is also contained in D. Kielak's appendix to Bartholdi's paper \cite{bartholdi}, where it is attributed to D. Tamari \cite{tamari}.}
\end{remark}

That the hypothesis about the homogeneous components in Proposition A is necessary is illustrated by

\begin{example}{\rm  Let $S$ be any ring such that $S$ fails to satisfy SRC and the canonical ring homomorphism $\mathbb Z\to S$ is injective. For example, $S$ could be the integral group ring of a free group. Let $R$ be the ring graded by the integers with the following properties:

\begin{enumerate*}

\item $R_k=S$ for $k\in \mathbb Z^+$, $R_0=\mathbb Z$, and $R_k=0$ for $k\in \mathbb Z^-$.

\item If $k\in \mathbb Z^+$, the product maps $R_0\times R_k\to R_{k}$ and $R_k\times R_0\to R_{k}$ are defined using the left and right $\mathbb Z$-module structures on $S$. 

\item If $k, l\in \mathbb Z^+$, the product map $R_k\times R_l\to R_{k+l}$ is defined using multiplication in the ring $S$.
\end{enumerate*}

\noindent Notice that $R$ is therefore isomorphic to the subring of the polynomial ring $S[x]$ consisting of all those polynomials with an integer constant term. 

Consider a homogeneous linear system over $S$ with fewer equations than unknowns that has only the trivial solution. Treating the coefficients as elements of $R_1$, we can also regard this system as one over $R$. From this point of view too, the system has only the trivial solution, as there are no nontrivial solutions in $R_k$ for each  
$k\in \mathbb Z$. Therefore $R$ fails to fulfill SRC, even though it is satisfied by $R_0$. }
\end{example}

\section{Proofs of Theorem A((i)$\implies$(ii)) and Corollary A}

The proof of the second implication in Theorem A is based on the following three lemmas extracted from the proof of [{\bf 3}, Theorem 1.1]. The first of these is established by [{\bf 3}, Equation (2.3)] and the preceding paragraph, using the F\o lner condition. 

\begin{lemma}{\rm (Bartholdi)} If a group $G$ is not amenable, then there is a finite set $S\subseteq G$ such that, for every finite subset $F$ of $G$,

\[
\pushQED{\qed}
|SF|>(1+\log |S|)|F|.\qedhere
\popQED
\]
\end{lemma}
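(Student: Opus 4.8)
The plan is to derive a Følner-type lower bound on $|SF|$ from the failure of amenability, where the constant involves $\log|S|$. The statement says: if $G$ is not amenable, there is a finite $S\subseteq G$ such that $|SF|>(1+\log|S|)|F|$ for \emph{every} finite $F$. Since $G$ is not amenable, Følner's criterion (Proposition 2.1) fails, meaning there exists a finite set $T$ and an $\epsilon_0>0$ with $|TF|\geq(1+\epsilon_0)|F|$ for all finite $F$. The difficulty is that this raw constant $1+\epsilon_0$ may be far smaller than $1+\log|T|$, so one cannot simply take $S=T$. The main idea will be to \emph{amplify} the expansion constant by passing to powers of the expanding set.

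First I would record the basic sub-multiplicativity principle: if $|TF|\geq c|F|$ holds for every finite $F$ with $c>1$, then iterating gives $|T^kF|\geq c^k|F|$ for every $k$, since $T^kF=T(T^{k-1}F)$ and we may apply the bound successively to the sets $T^{k-1}F,\,T^{k-2}F,\dots$. Thus by choosing $k$ large we obtain a set $S:=T^k$ with an expansion factor $c^k$ that is as large as we please. Concretely, pick $k$ so that $c^k\geq 1+\log|S|=1+k\log|T|$; because $c^k$ grows exponentially in $k$ while $k\log|T|$ grows only linearly, such a $k$ exists, and for that $k$ the set $S=T^k$ satisfies the required inequality $|SF|>(1+\log|S|)|F|$ for all finite $F$. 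I would verify the strict inequality by choosing $k$ slightly beyond the crossover point, and note that $|S|=|T^k|$ so $\log|S|\leq k\log|T|$, which is exactly the quantity we must dominate.

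The step I expect to be the main obstacle is pinning down the precise form of the non-amenability hypothesis we extract from the failure of Følner's condition, and making the powers-of-$T$ amplification quantitatively clean. The negation of Proposition 2.1 gives a single finite $T$ and a fixed $\epsilon_0$ with $|TF|\geq(1+\epsilon_0)|F|$ uniformly in $F$; one must be careful that this uniform bound is genuinely available (as opposed to merely $\limsup$-type information), but this is exactly what the contrapositive of Følner's criterion supplies. With $c=1+\epsilon_0$ fixed, the inequality $c^k\geq 1+k\log|T|$ is elementary to arrange, and choosing $k$ large enough to force strictness completes the argument. I would present the final choice of $k$ explicitly so that the constant $1+\log|S|$ appears naturally rather than as an afterthought.
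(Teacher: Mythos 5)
Your proof is correct and is essentially the argument the paper relies on: the paper does not prove Lemma 3.1 itself but cites Bartholdi's Equation (2.3) and the surrounding discussion, where exactly this amplification is carried out --- negate F\o lner to get a finite $T$ with $|TF|\geq(1+\epsilon_0)|F|$ uniformly, pass to $S=T^k$ so that $|SF|\geq(1+\epsilon_0)^k|F|$, and choose $k$ with $(1+\epsilon_0)^k>1+k\log|T|\geq 1+\log|S|$. The only (cosmetic) caveat is that the strict inequality requires $F\neq\emptyset$, which is implicit in the lemma's statement.
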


Lemma 3.2 below is a consequence of [{\bf 3}, Lemma 2.1] and its proof, as well as the reasoning presented in the paragraph following [{\bf 3}, Equation (2.3)].

\begin{lemma}{\rm (Bartholdi)} Let $S$ be a finite set with $|S|\geq 2$. Then there exist a finite set $Y$ and a family of subsets $\{X_s:s\in S\}$ of $Y$ such that the two assertions below are true.

\begin{enumerate*}

\item $\displaystyle{\Big |\bigcup_{s\in S} X_s\Big |=|Y|-1.}$

\item For every subset $T$ of $S$,

$$\Big |X_s\backslash \bigcup_{t\in T\backslash \{s\}} X_t\Big |\geq \frac{|Y|}{(1+\log |S|)|T|}$$
for all $s\in T$. \qed

\end{enumerate*}

\end{lemma}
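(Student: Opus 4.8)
The plan is to build $Y$ and the family $\{X_s\}$ by an explicit symmetric counting construction and then collapse both requirements into a single numerical inequality. Write $k=|S|$. Guided by the fact that the desired conclusion is invariant under permutations of $S$, I would look for $Y$ of the following shape: for each subset $A\subseteq S$ introduce a ``cell'' consisting of $n_A$ points, let $Y$ be the disjoint union of all these cells, and set $X_s$ to be the union of those cells $A$ with $s\in A$. Under this ansatz a point belongs to no $X_s$ precisely when it lies in the cell $A=\emptyset$, so (i) amounts to the single demand $n_\emptyset=1$; and for $T\subseteq S$ with $s\in T$, the set $X_s\setminus\bigcup_{t\in T\setminus\{s\}}X_t$ is exactly the union of the cells $A$ with $A\cap T=\{s\}$, so (ii) becomes
\[
\sum_{A:\,A\cap T=\{s\}} n_A \;\ge\; \frac{|Y|}{(1+\log k)\,|T|}.
\]

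The right-hand side scales like $1/|T|$, which suggests choosing $n_A$ to depend only on $j=|A|$ in a way that makes the left-hand sum exactly proportional to $1/|T|$. The choice that accomplishes this is
\[
n_A:=P\,(|A|-1)!\,(k-|A|)!\quad(A\neq\emptyset),\qquad n_\emptyset:=1,
\]
for a positive integer scaling parameter $P$ to be fixed at the end. A direct count then gives $|Y|=1+Pk!\,H_k$, where $H_k=\sum_{j=1}^{k}1/j$. For the sum in (ii), a cell $A$ with $A\cap T=\{s\}$ is obtained by adjoining to $s$ an arbitrary subset of the $k-|T|$ elements outside $T$; summing $n_A$ over these subsets and collapsing the resulting binomial sum with the hockey-stick identity $\sum_{j=0}^{N}\binom{m+j-1}{j}=\binom{m+N}{N}$ (taking $m=|T|$, $N=k-|T|$) produces the clean value
\[
\sum_{A:\,A\cap T=\{s\}} n_A=\frac{Pk!}{|T|},
\]
independent of which $T$ and which $s\in T$ are chosen.

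Substituting these two evaluations, requirement (ii) reduces, after the common factor $|T|$ cancels, to the single inequality $Pk!\,(1+\log k-H_k)\ge 1$, which no longer involves $T$. The entire argument therefore rests on the classical strict estimate $H_k<1+\log k$, valid for every $k\ge 2$ because each term $1/j$ with $j\ge 2$ is strictly smaller than $\int_{j-1}^{j}dx/x$; this is exactly where the constant $1+\log|S|$ of the statement originates. Since $1+\log k-H_k>0$, the inequality holds once $P$ is taken large enough, and fixing such a $P$ finishes the construction. I expect the main obstacle to lie not in the verification but in the two discovery steps: choosing the weights $n_A=P(|A|-1)!(k-|A|)!$ so that the exclusive-part count depends on $|T|$ in precisely the harmonic fashion, and recognizing that the lone uncovered point forced by (i) can coexist with (ii) only because the slack $1+\log k-H_k$ is strictly positive and can be amplified by scaling through $P$.
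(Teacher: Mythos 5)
Your construction is correct: the reduction of (i) to $n_\emptyset=1$ and of (ii) to $\sum_{A\cap T=\{s\}}n_A\ge |Y|/((1+\log k)|T|)$ is right, the hockey-stick computation does give $\sum_{A\cap T=\{s\}}n_A=Pk!/|T|$ and $|Y|=1+Pk!H_k$, and the final inequality $Pk!\,(1+\log k-H_k)\ge 1$ is achievable precisely because $H_k<1+\log k$ for $k\ge 2$ (note that $P=1$ does not suffice when $k=2$, so the scaling parameter is genuinely needed). The paper does not reprove this lemma --- it cites Bartholdi's Lemma 2.1, whose construction is essentially the same subset-weighted one with weights proportional to $(|A|-1)!\,(k-|A|)!$ --- so your argument is in substance the paper's intended proof; moreover, it has the small advantage that the strengthened form of (i) (exactly one uncovered point, rather than Bartholdi's $\bigcup_s X_s\subsetneq Y$), which the authors justify only by ``a look at his construction,'' is immediate from $n_\emptyset=1$.
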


With regard to Lemma 3.2(i), the reader will notice that Bartholdi observes merely that $\bigcup_{s\in S} X_s\subsetneq Y$.
However, a look at his construction of the sets $Y$ and $X_s$ reveals that our stronger assertion holds.

The last of the lemmas gleaned from the proof of [{\bf 3}, Theorem 1.1] is

\begin{lemma}{\rm (Bartholdi)} Take $K$ to be a field. Let $S$ and $Y$ be finite sets and $\{X_s:s\in S\}$ a family of subsets of $Y$. 
Moreover, for each $s\in S$ and $T\subseteq S$, write 

$$X_{s,T}:=X_s\backslash \bigcup_{t\in T\backslash \{s\}} X_t.$$
Then there is a finite field extension $L$ of $K$ and there are linear maps $\alpha_s:LY\to LY$ for all $s\in S$ such that the following two properties hold.
\begin{enumerate*}

\item ${\rm Im}\ \alpha_s\subseteq LX_s$ for all $s\in S$.
\item Whenever  $\{T_s:s\in S\}$ is a family of subsets of $S$ with $\sum_{s\in S}|X_{s,T_s}|\geq |Y|$, we have

$$\bigcap_{s\in S} {\rm Ker}\ \alpha_{s,T_s}=0,$$
where $\alpha_{s,T_s}$ is the composition of the projection map $\pi_{s,T_s}:X_s\to X_{s,T_s}$ with $\alpha_s$.
\end{enumerate*}

\end{lemma}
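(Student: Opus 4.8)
The plan is to construct the $\alpha_s$ as \emph{generic} linear maps and then specialize the generic entries into a sufficiently large finite extension of $K$. Observe first that condition (ii) is precisely the injectivity of the combined map
$$\Phi_{\{T_s\}}:=\bigoplus_{s\in S}\alpha_{s,T_s}\colon LY\longrightarrow\bigoplus_{s\in S}LX_{s,T_s},$$
since $\ker\Phi_{\{T_s\}}=\bigcap_{s\in S}\ker\alpha_{s,T_s}$. The domain has dimension $|Y|$ while the target has dimension $\sum_{s\in S}|X_{s,T_s}|\ge|Y|$, so injectivity is possible on dimensional grounds; the real content is to arrange it \emph{simultaneously} for every admissible family $\{T_s\}$ with a single choice of the $\alpha_s$.

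First I would introduce independent indeterminates $t_{s,x,y}$ indexed by $s\in S$, $x\in X_s$, and $y\in Y$, and work over the rational function field $K_0:=K(t_{s,x,y})$. Define $\alpha_s\colon K_0Y\to K_0Y$ by $\alpha_s(y):=\sum_{x\in X_s}t_{s,x,y}\,x$, which manifestly has image in $K_0X_s$, giving (i). With respect to the basis $Y$ of the domain and the basis $\{(s,x):x\in X_{s,T_s}\}$ of the target, the entry of $\Phi_{\{T_s\}}$ in row $(s,x)$ and column $y$ is exactly $t_{s,x,y}$. The crucial feature is that, because the label $s$ is carried along, all of these entries are \emph{distinct} indeterminates, even when the sets $X_s$ overlap as subsets of $Y$.

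Next, fix an admissible family $\{T_s\}$. Because there are at least $|Y|$ rows, I would choose any $|Y|$ of them; the resulting $|Y|\times|Y|$ submatrix has pairwise distinct indeterminate entries, so its determinant is a nonzero polynomial, the diagonal monomial being uncancellable by any other permutation. Hence $\Phi_{\{T_s\}}$ has rank $|Y|$ and is injective over $K_0$. Since there are only finitely many admissible families, taking the product $P$ of one such nonzero minor for each of them produces a single nonzero polynomial in the $t_{s,x,y}$. To descend to a finite extension, I would invoke the Schwartz--Zippel lemma: a nonzero polynomial in finitely many variables does not vanish identically on $L^N$ as soon as $|L|$ exceeds its degree. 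Choosing a finite extension $L$ of $K$ that is large enough (with $L=K$ permissible when $K$ is infinite), I may specialize each $t_{s,x,y}$ to a value $\tau_{s,x,y}\in L$ with $P(\tau)\ne0$. Setting $\alpha_s(y):=\sum_{x\in X_s}\tau_{s,x,y}\,x$ over $L$ preserves (i), and the non-vanishing of each factor of $P$ at $\tau$ yields injectivity of every $\Phi_{\{T_s\}}$, which is (ii).

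The only genuine obstacle is the step establishing generic injectivity: one must recognize that the matrix of $\Phi_{\{T_s\}}$ is, after the index-$s$ bookkeeping, a matrix of \emph{distinct} indeterminates, so that the bare dimension inequality $\sum_{s\in S}|X_{s,T_s}|\ge|Y|$ already forces full rank. Once this is in place, the passage to a finite extension is routine, and unnecessary when $K$ is infinite.
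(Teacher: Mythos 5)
Your proposal is correct and follows essentially the same route as the paper: generic maps whose matrices have pairwise distinct indeterminate entries, nonvanishing of the product of one $|Y|\times|Y|$ minor per admissible family, and specialization into a finite extension. The only cosmetic difference is that you invoke Schwartz--Zippel for the specialization step, whereas the paper uses its own elementary Lemma 3.4 to the same effect.
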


We furnish a proof of Lemma 3.3 that employs exactly the same reasoning as \cite{bartholdi} but includes more detail. The argument relies on the following simple observation.

\begin{lemma} Let $K$ be a field and $f(x_1,\dots,x_n)$ a nonconstant polynomial in $n$ variables whose coefficients lie in $K$. Then, for any $b\in K$, there is a finite field extension $L$ of $K$ containing elements $a_1, \dots, a_n$ such that
$f(a_1,\dots, a_n)=b$. 
\end{lemma}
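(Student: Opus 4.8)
The plan is to reduce the multivariate problem to a single-variable one by specializing all but one of the variables, and then to solve the resulting one-variable equation by adjoining a root. Since $f$ is nonconstant, at least one variable occurs in it; after relabeling, assume $x_1$ does. Writing $f$ as a polynomial in $x_1$ with coefficients in $K[x_2,\dots,x_n]$,
$$f=\sum_{k=0}^{d}c_k(x_2,\dots,x_n)\,x_1^k,$$
we may take $d\ge 1$ with $c_d$ a nonzero element of $K[x_2,\dots,x_n]$.

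First I would specialize $x_2,\dots,x_n$ to values that keep the $x_1$-degree equal to $d$. It suffices to find elements $a_2,\dots,a_n$, in some finite extension of $K$, at which the leading coefficient $c_d$ does not vanish. Because $c_d$ is a nonzero polynomial and the algebraic closure $\overline K$ is infinite, $c_d$ does not vanish identically on $\overline K^{\,n-1}$; hence there is a point $(a_2,\dots,a_n)\in\overline K^{\,n-1}$ with $c_d(a_2,\dots,a_n)\neq 0$. As each $a_i$ is algebraic over $K$, the field $K':=K(a_2,\dots,a_n)$ is a finite extension of $K$.

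Next I would solve the one-variable problem over $K'$. The polynomial $g(x_1):=f(x_1,a_2,\dots,a_n)\in K'[x_1]$ has degree exactly $d\ge 1$, so $g(x_1)-b$ is nonconstant. Adjoining a root of an irreducible factor of $g(x_1)-b$ produces a finite extension $L$ of $K'$ containing an element $a_1$ with $g(a_1)=b$. Since $L/K'$ and $K'/K$ are both finite, $L/K$ is finite, and by construction $f(a_1,\dots,a_n)=b$.

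The only genuine subtlety is the specialization step, and it is also the reason for allowing a field extension at all: when $K$ is finite, a nonzero polynomial such as $c_d$ may vanish at every $K$-rational point, so no suitable $a_i$ need exist in $K$ itself. This is precisely why the specialization is carried out in $\overline K$; the conclusion that $L/K$ is finite then comes for free, since the chosen coordinates are algebraic over $K$. When $K$ is infinite one may instead take $a_2,\dots,a_n\in K$ directly, and no preliminary extension is needed before solving the one-variable equation.
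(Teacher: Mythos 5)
Your proof is correct, but it reaches the specialization point by a different mechanism than the paper. Both arguments share the same skeleton: specialize all variables but one so that the polynomial stays nonconstant in the remaining variable, then adjoin a root of the resulting one-variable polynomial minus $b$. The paper finds the specialization by induction on $n$: it writes $f$ as a polynomial in $x_n$ over $K[x_1,\dots,x_{n-1}]$, picks a nonzero higher coefficient $g$, and applies the lemma itself (inductively) to make $g$ nonvanishing, so the whole argument is self-contained and never leaves the realm of finite extensions built one root at a time. You instead pass to the algebraic closure $\overline K$ and invoke the fact that a nonzero polynomial over an infinite field does not vanish identically, then observe that the chosen coordinates are algebraic over $K$ so the generated extension is still finite. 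Your route is shorter and avoids the induction, at the cost of assuming the existence of $\overline K$ and the nonvanishing lemma over infinite fields; the paper's induction buys elementarity. Your closing remark correctly identifies why the field extension is unavoidable for finite $K$, which is the same subtlety the paper's base case quietly handles by allowing $L$ to be a proper extension.
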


\begin{proof} We proceed by induction on $n$. If $n=1$, we take $L$ to be any finite field extension of $K$ such that the polynomial $f(x_1)-b$ has a zero in $L$. Now we consider the case $n>1$.  If $x_n$ fails to occur in any nonzero term in $f(x_1,\dots, x_n)$, then the conclusion follows from the inductive hypothesis. Assume, then, that $x_n$ occurs in some nonzero term. Regard $f(x_1,\dots,x_n)$ as a polynomial in the variable $x_n$ with coefficients in $K[x_1,\dots, x_{n-1}]$. At least one of the coefficients above the constant term in this polynomial is nonzero; denote that coefficient by $g(x_1,\dots,x_{n-1})$. By the inductive hypothesis, there is a finite field extension $L_0$ of $K$
in which there are elements $a_1, \dots, a_{n-1}$ such that $g(a_1,\dots,a_{n-1})\neq 0$. Hence $f(a_1,\dots,a_{n-1}, x_n)$ is a nonconstant polynomial in $L_0[x_n]$. Thus there is field $L$ containing $L_0$ such that $[L:L_0]<\infty$ and $L$ has an element $a_n$ with $f(a_1,\dots, a_n)=b$.

\end{proof}

\begin{proof}[Proof of Lemma 3.3] Write $m:=|Y|$; for convenience, we will identify $Y$ with the set $\{1,\dots,m\}$.
 For each $s\in S$, let $A_s$ be the $m\times m$ matrix with the following two properties:
\begin{itemize}
\item For every $i\in X_s$ and $j=1,\dots, m$, the $(i,j)$-entry of $A_s$ is the indeterminate $x_{sij}$.
\item  For every $i\in Y\backslash X_s$, the $i$th row of $A_s$ is comprised entirely of zeros.
\end{itemize}

For every family $\mathcal{T}:=\{T_s:s\in S\}$ of subsets of $S$ with $v_{\mathcal{T}}:=\sum_{s\in S}|X_{s,T_s}|\geq m$, let $B_{\mathcal{T}}$ be the $v_{\mathcal{T}}\times m$ matrix formed by taking from each matrix $A_s$ the rows corresponding to the elements of $X_{s,T_s}$. In constructing the matrices $B_{\mathcal{T}}$, we proceed according to a specific order assigned to $S$ and also follow the increasing order on $X_{s,T_s}$. 
Next let $D_{\mathcal{T}}$ be the determinant of the first $m$ rows of $B_{\mathcal{T}}$. Then  $D_{\mathcal{T}}$ is a nonzero polynomial in the indeterminates $\{x_{sij}:s\in S, i\in X_s, 1\leq j\leq m\}$. 

According to Lemma 3.4, there is a finite field extension $L$ of $K$ containing a subset $\{a_{sij}:s\in S, i\in X_s, 1\leq j\leq m\}$ such that, if the $a_{sij}$ are substituted for the variables $x_{sij}$, the product $\prod_{\mathcal{T}} D_{\mathcal{T}}$ is nonzero. Note that this product is taken over all possible families $\mathcal{T}:=\{T_s:s\in S\}$ of subsets of $S$ with $v_{\mathcal{T}}\geq m$. Next, for each $s\in S$, let $A'_s$ be the matrix obtained from $A_s$ by replacing the indeterminate $x_{sij}$ by the value $a_{sij}$ for every $i\in X_s$ and $j=1,\dots m$. 

Finally, for each $s\in S$, let $\alpha_s$ be the linear map $LY\to LY$ whose matrix is $A'_s$ . Then the maps $\alpha_s$ plainly fulfill condition (i). Moreover, condition (ii) follows from the fact that, for any family $\mathcal{T}:=\{T_s:s\in S\}$ of subsets of $S$ with $v_{\mathcal{T}}\geq m$,  the polynomial $D_{\mathcal{T}}$ has a nonzero value if each $x_{sij}$ assumes the value $a_{sij}$. 
\end{proof}

We are now ready to apply Bartholdi's ideas to group-graded rings.

\begin{proof}[Proof of Theorem A((i)$\implies$(ii))] We will just prove the implication for SRC, the LSRC case being similar. Our approach is to establish the contrapositive, so we assume that either $G$ is not amenable or $R_1$ does not satisfy SRC. 
In the second case, there is a right $R_1$-module embedding $\iota :R_1^{n+1}\to R_1^n$ for some $n\in \mathbb N$. Proposition 1.1(v) implies that $R$ is a flat $R_1$-module, which means that tensoring $\iota$ by the identity map $R\to R$ produces a right $R$-module embedding $R^{n+1}\to R^n$. Therefore $R$ fails to satisfy SRC. 

Next we treat the case where $G$ is not amenable. Let $Z$ denote the commutative subring of $R_1$ consisting of all the integer multiples of $1$, and let $K$ be the field of fractions of $Z$. As a flat $R_1$-module, $R$ is $R_1$-torsion-free, which means that $R$ embeds in the ring $R':=R\otimes_ZK$. It is then easy to see that any right $R'$-module embedding $(R')^{n+1}\to (R')^n$ gives rise to a right $R$-module embedding $R^{n+1}\to R^n$. Hence there is no real loss of generality in assuming $K\subseteq R_1$.  

According to Lemma 3.1, there is a finite subset $S$ of $G$ such that 
$|SF|>(1+\log |S|)|F|$ for all finite sets $F\subseteq G$. Choose next a finite set $Y$ and a family of subsets $\{X_s:s\in S\}$ that have the properties described in Lemma 3.2. 
Furthermore, take a field $L$ and linear maps $\alpha_s:LY\to LY$ for every $s\in S$ to be as in Lemma 3.3. In addition, for each $s\in S$, pick a nonzero element $b_s\in R_s$, and let $\beta_s:R\to R$ be the right $R$-module monomorphism $r\mapsto b_sr$ (see Proposition 1.1(vi)). Finally, define the right $L\otimes_K R$-module homomorphism $\Theta: LY\otimes_K R\to LY\otimes_K R$ by 
$\Theta:=\sum_{s\in S} (\alpha_s\otimes \beta_s).$
Notice that ${\rm Im}\ \Theta$ is contained in a free $L\otimes_K R$-submodule of $LY\otimes_K R$ of rank $|Y|-1$.  We will now show that $\Theta$ is injective. This will mean that there is an $R$-module embedding $R^{|Y|[L:K]}\to R^{(|Y|-1)[L:K]}$, thus demonstrating that $R$ fails to satisfy SRC.  

To prove that $\Theta$ is monic, let $u\in LY\otimes_K R$ with $u\neq 0$. Since $R$ is graded by $G$, we can view $LY\otimes_K R$ as a ring graded by $G$. From this perspective, let $F\subseteq G$ be the support of $u$.  As argued in the final paragraph of the proof of [{\bf 3}, Theorem 1.1], it follows from Lemma 3.2 that there is an $f_0\in F$ such that, if $T_s:=\{t\in S:sf_0\in tF\}$ for each $s\in S$, we have $\sum_{s\in S}|X_{s,T_s}|\geq |Y|$.  Consequently, by Lemma 3.3(ii), $\bigcap_{s\in S} {\rm Ker}\ \alpha_{s,T_s}=0.$ Thus 
$\bigcap_{s\in S} {\rm Ker}(\alpha_{s,T_s}\otimes \beta_s)=0$, which implies $(\alpha_{s_0,T_{s_0}}\otimes \beta_{s_0})(u_{f_0})\neq 0$ for some $s_0\in S$. Now we consider the $LX_{s_0,T_{s_0}}\otimes_K R_{s_0f_0}$-component of $\Theta(u)$. Writing $F':=\{f\in F:f\neq f_0\ \mbox{and}\ s_0f_0f^{-1}\in S\}$, we can express this component as 

\begin{equation} (\alpha_{s_0,T_{s_0}}\otimes \beta_{s_0})(u_{f_0}) + \sum_{f\in F'}(\pi_{s_0,T_{s_0}}\alpha_{s_0f_0f^{-1}}\otimes \beta_{s_0f_0f^{-1}})(u_f).\end{equation}

\noindent If $f\in F'$, then $s_0f_0f^{-1}\in T_{s_0}-\{s_0\}$, which means $\pi_{s_0,T_{s_0}}\alpha_{s_0f_0f^{-1}}=0$. Hence all the terms in the summation on the right in (3.1) are zero. Therefore $\Theta(u)\neq 0$, so that $\Theta$ is injective. 
\end{proof}

\begin{remark} {\rm  For the benefit of the reader, we clarify precisely where the two hypotheses about the graded ring $R$ play a role in the proof. The fact that the grading is strong is
necessary to conclude that $R$ is $R_1$-flat, which is invoked in the first and second paragraphs. That $R_1$ is a domain is also employed in two places. First, in the second paragraph, we use that $Z$ fails to contain any nonzero zero divisors of $R_1$. Second, in the third paragraph, we need that $R_1$ is a domain to apply Proposition 1.1(vi) to obtain that $\beta_s$ is injective. 
}
\end{remark}

Corollary A is proved very easily from Theorem A((i)$\implies$(ii)). 

\begin{proof}[Proof of Corollary A]
The {\it only if} direction is proved in [{\bf 12}, Theorem 6.37]. For the {\it if} statement, it will be convenient to prove the contrapositive. Suppose that $G$ is non-amenable. Theorem A yields that, for some $n\in \mathbb N$, there is a right $\mathbb CG$-module embedding  
$\iota:(\mathbb C G)^{n+1}\to (\mathbb C G)^{n}.$
Tensoring with $\mathcal N(G)$, we obtain an exact sequence
\begin{equation*} 0\to \mathrm{Tor}_1^{\mathbb C G}(M,\mathcal N(G))\to\mathcal N(G)^{n+1}\to\mathcal N(G)^{n},\end{equation*} 
where $M={\rm coker}\ \iota$. Hence $\dim_{\mathcal N(G)}(\mathrm{Tor}_1^{\mathbb C G}(M,\mathcal N(G)))\geq1$.  
\end{proof}

\section{Corollary B and the Hall property}

In our final section, we prove Corollary B and discuss its implications for the study of the structure of the underlying additive group of a finitely generated module over an integral group ring.  

\begin{proof}[Proof of Corollary B] We will just consider the case where $SG$ is right Noetherian; the other case follows by a similar argument.
Since right Noetherian rings satisfy SRC, we have that $G$ must be amenable by Theorem A. To verify the second condition, we will construct an injective map $H\mapsto I_H$ from the set of subgroups of $G$ to the set of right ideals of $SG$ that preserves inclusion. Let $H$ be a subgroup of $G$, and choose $\{x_j:j\in J\}$ to be a complete set of right coset representatives of $H$ in $G$. For every element $r\in SG$, we write $r_g\in S$ for the coefficient of $g$ in the sum $r$. Using this notation, we set

$$I_H:=\{r\in SG:\sum_{g\in Hx_j} r_g=0\ \mbox{for all $j\in J$}\}.$$

\noindent 
Notice that $I_H$ is a right ideal of $SG$, and that the map
$H\mapsto I_H$ has the desired properties. As a result, since $SG$ is right Noetherian, $G$ must satisfy the maximal condition on subgroups; in other words, every subgroup of $G$ is finitely generated. 
\end{proof}

\begin{remark} {\rm The argument that $SG$ being Noetherian implies that every subgroup of $G$ is finitely generated is well known; it appears, for example, in a contribution of Y. de Cornulier \cite{cornulier} on {\it mathoverflow.net}. } 

\end{remark}

The reader might ask whether Corollary B can be generalized to strongly group-graded rings that are Noetherian. The answer is that, although the amenability conclusion clearly follows from Theorem A, the group may have non-finitely-generated subgroups, as the following example illustrates. 

\begin{example}{\rm
Let $p$ be a prime number, and let $K$ be the subfield of $\mathbb R$ generated by $\{p^{1/n}:\ n\in \mathbb N\}$. Being a field, $K$ is Noetherian. Now take $G$ to be the additive abelian group $\mathbb Q/\mathbb Z$. Then $K$ is strongly graded by $G$ with $K_{r+\mathbb Z}:=\mathbb Qp^r$ for each $r\in \mathbb Q$. In fact, $K$ is a crossed product $\mathbb Q\ast G$. However, $G$ is not finitely generated. }
\end{example}

Now we use Corollary B to shed some light on a question suggested by an important result about modules over polycyclic groups proved by Philip Hall.
In [{\bf 8}, Lemma 5.2], he shows that, if $G$ is a polycyclic group, then, for any finitely generated $\mathbb ZG$-module $M$, there is a finite set of primes $\pi$ such that the underlying additive group of $M$ is an extension of a free abelian group by a $\pi$-torsion group. Inspired by Hall's lemma, we make the following definition.

\begin{definition} {\rm A group $G$ has the {\it Hall property} if, for every finitely generated $\mathbb ZG$-module $M$, there is a finite set of primes $\pi$ such that the underlying additive group of $M$ is an extension of a free abelian group by a $\pi$-torsion group.} 
\end{definition}

In its most general form, Hall's result may be stated as follows. 

\begin{theorem}{\rm (Hall) [{\bf 11}, 4.3.3]} Every virtually polycyclic group has the Hall property. 
\end{theorem}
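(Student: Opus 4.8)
The plan is to reduce the theorem to the case of genuinely polycyclic groups and then to proceed by induction along a polycyclic series. First I would reduce from virtually polycyclic to polycyclic: if $G$ is virtually polycyclic, let $H\le G$ be a polycyclic subgroup of finite index. Given a finitely generated $\mathbb ZG$-module $M$, its restriction $M|_H$ is a finitely generated $\mathbb ZH$-module (since $\mathbb ZG$ is a finitely generated $\mathbb ZH$-module, $G$ being a finite union of cosets of $H$), and the underlying additive group of $M$ is unchanged by restriction. Thus it suffices to prove the Hall property for the polycyclic group $H$, which is exactly the statement of [{\bf 8}, Lemma 5.2] cited in the paragraph preceding the definition.

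To make the induction self-contained, I would instead prove the polycyclic case directly by induction on the Hirsch length $h(G)$. The base case $h(G)=0$ means $G$ is finite, so $M$ is a finitely generated abelian group and the conclusion is immediate with $\pi=\emptyset$ (a finitely generated abelian group is an extension of a free abelian group by a finite, hence torsion, group). For the inductive step, a polycyclic group $G$ with $h(G)\ge 1$ has a normal subgroup $N$ with $G/N$ infinite cyclic, say $G/N=\langle tN\rangle$; here $N$ is polycyclic with $h(N)=h(G)-1$. I would regard $M$ as a module over $\mathbb ZG=(\mathbb ZN)[t,t^{-1};\sigma]$, a skew Laurent polynomial ring over the Noetherian ring $\mathbb ZN$, with $\sigma$ the automorphism induced by conjugation by $t$. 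Since $\mathbb ZN$ is Noetherian and $M$ is finitely generated over $\mathbb ZG$, one extracts a finitely generated $\mathbb ZN$-submodule $M_0\subseteq M$ that generates $M$ over $\mathbb ZG$; the key point is that $M=\sum_{k\ge 0}M_0t^k$ and, by a standard Noetherian argument on the ascending chain of $\mathbb ZN$-submodules $M_0+M_0t+\cdots+M_0t^k$, the module $M$ is a finitely generated $\mathbb ZN$-module after all, OR at least its additive structure is controlled by finitely many translates. This is the delicate point and where I expect the main obstacle to lie.

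The heart of the matter, then, is to pass additive information from the $\mathbb ZN$-module structure up to $M$. By the inductive hypothesis applied to the finitely generated $\mathbb ZN$-module $M_0$, there is a finite set of primes $\pi_0$ such that the additive group of $M_0$ is an extension of a free abelian group by a $\pi_0$-torsion group. The obstacle is that $M$ is a union (or sum) of the $\sigma$-translates $M_0t^k$, and a priori infinitely many of them are needed, so one must argue that no new primes enter and that the free-abelian-by-torsion structure is preserved under the passage to $M$. The cleanest route is Hall's original device: show that the torsion subgroup $\mathrm{Tor}(M)$ of the additive group of $M$ is itself a finitely generated $\mathbb ZG$-module (this uses that $\mathbb ZG$ is Noetherian together with the fact that torsion is a $\mathbb ZG$-submodule), so that $\mathrm{Tor}(M)$ is $\pi$-torsion for a \emph{finite} set of primes $\pi$; and then show that $M/\mathrm{Tor}(M)$, being a finitely generated torsion-free module over the Noetherian ring $\mathbb ZG$ whose underlying group is torsion-free, has free abelian additive group. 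The finiteness of $\pi$ is exactly what prevents the naive union argument from failing, and establishing it is where the Noetherian hypothesis on $\mathbb ZG$ (valid for virtually polycyclic $G$ by the Hilbert basis theorem for skew Laurent extensions) does the essential work.

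Finally I would assemble these pieces: the short exact sequence $0\to \mathrm{Tor}(M)\to M\to M/\mathrm{Tor}(M)\to 0$ of abelian groups exhibits the additive group of $M$ as an extension of the free abelian group $M/\mathrm{Tor}(M)$ by the $\pi$-torsion group $\mathrm{Tor}(M)$ — which is precisely the Hall property. I would remark that this argument leans on three facts available to us: that virtually polycyclic groups have Noetherian integral group rings; that finitely generated torsion-free modules over $\mathbb ZG$ with $G$ polycyclic have finitely generated torsion subgroups with bounded prime support; and the inductive reduction along an infinite-cyclic quotient. I expect the argument that $\mathrm{Tor}(M)$ involves only finitely many primes to be the genuine content, with everything else being formal; this is consistent with the fact that Hall's original proof is nontrivial precisely at this step.
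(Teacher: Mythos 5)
The paper does not actually prove this theorem; it quotes it from [{\bf 11}, 4.3.3] (Hall's original [{\bf 8}, Lemma 5.2]), so there is no in-paper proof to compare against. Judged on its own terms, your proposal has a genuine gap at the decisive step. The reduction to a finite-index polycyclic subgroup is fine, and the first half of your ``Tor'' strategy is correct: $\mathrm{Tor}(M)$ is a $\mathbb ZG$-submodule, hence finitely generated because $\mathbb ZG$ is Noetherian, hence of bounded exponent and therefore $\pi$-torsion for a finite set $\pi$. But the second half --- that $M/\mathrm{Tor}(M)$, being a finitely generated $\mathbb ZG$-module with torsion-free additive group, ``has free abelian additive group'' --- is false. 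Take $G=\langle t\rangle$ infinite cyclic and $M=\mathbb ZG/(2t-1)\cong\mathbb Z[1/2]$, a cyclic module on which $t$ acts as multiplication by $1/2$: its additive group is torsion-free of rank one but not free abelian. (This module does satisfy the Hall property, but only via $0\to\mathbb Z\to\mathbb Z[1/2]\to\mathbb Z(2^{\infty})\to 0$, with the free group as a \emph{subgroup} and a $\{2\}$-torsion quotient on top --- exactly the shape demanded by Definition 4.2 and exploited in Lemma 4.7.) So the free abelian group cannot be obtained as a complement to the torsion subgroup; it must be dug out from below, and the entire content of Hall's lemma is that the quotient by a suitable free subgroup, though typically an infinite torsion group, involves only finitely many primes.

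Two smaller points: your hedged suggestion that $M$ might turn out to be finitely generated over $\mathbb ZN$ in the cyclic-extension step also fails ($\mathbb Z[t,t^{-1}]$ itself, with $N=1$), and the base-case remark that one may take $\pi=\emptyset$ is a slip (a finite torsion group needs the primes dividing its order, though the set is of course still finite). The genuine work in the inductive step, which your outline does not supply, is to control the primes occurring in $M/F$ as one passes from a finitely generated $\mathbb ZN$-submodule $M_0$ to the full module $M=\sum_k M_0t^k$: one must produce a single finite set $\pi$ that works for all translates simultaneously. This is where Hall's argument (and the treatment in [{\bf 11}]) does something nontrivial, and it is precisely the step your proposal identifies as delicate but then replaces with the false freeness claim.
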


One consequence of Theorem 4.4 is that every simple $\mathbb ZG$-module has prime characteristic if $G$ is virtually polycyclic; this is the first step toward proving that such modules are actually finite, established by J. E. Roseblade \cite{roseblade}. It is a longstanding question whether there are any other sorts of groups whose simple modules also all have prime characteristic. A significant advance on this problem was accomplished by A.V. Tushev [{\bf 16}, Theorem 3], who showed that this is true for virtually metabelian groups of finite rank. In order to make further progress, it would be beneficial to identify some other types of groups that satisfy the Hall property. Unfortunately, however, as we show in our next theorem, the class of groups enjoying this property is, in fact, severely limited. 

\begin{theorem} Let $G$ be a group with the Hall property. Then $\mathbb QG$ is right and left Noetherian; hence, by Corollary B, $G$ is amenable, and every subgroup of $G$ is finitely generated.

\end{theorem}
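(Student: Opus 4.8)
The plan is to prove the substantive part of the statement, namely that the Hall property forces $\mathbb{Q}G$ to be right Noetherian; since the antiautomorphism $\sum a_g g\mapsto\sum a_g g^{-1}$ interchanges right and left ideals, this gives left Noetherian as well, and the remaining conclusions then follow by applying Corollary B with the domain $S=\mathbb{Q}$. I would argue by contraposition: assuming $\mathbb{Q}G$ is not right Noetherian, I will manufacture a single finitely generated (indeed cyclic) $\mathbb{Z}G$-module whose additive group cannot be an extension of a free abelian group by a $\pi$-torsion group for any finite set of primes $\pi$, contradicting the Hall property. The features of a Hall-good module $M$ that I intend to exploit are twofold: first, its torsion subgroup injects into the $\pi$-torsion quotient, so the torsion of $M$ involves only the finitely many primes of $\pi$; second, if $M$ is torsion-free then it embeds in $A\otimes_{\mathbb{Z}}\mathbb{Z}[\pi^{-1}]$ for some free abelian $A$, so no nonzero element of $M$ is divisible by infinitely many primes.

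For the construction I would start from a strictly ascending chain of right ideals $0=I_0\subsetneq I_1\subsetneq I_2\subsetneq\cdots$ of $\mathbb{Q}G$. Setting $L_n:=I_n\cap\mathbb{Z}G$ yields a strictly ascending chain of right ideals of $\mathbb{Z}G$, each pure (so each $\mathbb{Z}G/L_n$ is torsion-free as an abelian group) and satisfying $L_n\otimes_{\mathbb{Z}}\mathbb{Q}=I_n$, which is what makes the chain $L_n$ strict. Choose $c'_n\in L_n\setminus L_{n-1}$ for each $n\ge1$. Here I would split into two cases. If for some $n$ the torsion-free cyclic module $\mathbb{Z}G/L_{n-1}$ contains a nonzero element divisible by infinitely many primes, then by the second property above this module already violates the Hall property and we are done. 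So assume instead that in every $\mathbb{Z}G/L_{n-1}$ each nonzero element is divisible by only finitely many primes.

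The heart of the argument is then to produce torsion at infinitely many primes inside one cyclic module. I would choose distinct primes inductively and rescale the generators: having chosen $p_1,\dots,p_{n-1}$, let $\Pi_n$ be the finite set of primes dividing the image of $c'_n$ in $\mathbb{Z}G/L_{n-1}$, pick a prime $p_n\notin\Pi_n\cup\{p_1,\dots,p_{n-1}\}$, and set $c_n:=(\prod_{k<n}p_k)\,c'_n$. Form the cyclic module $M:=\mathbb{Z}G/N$, where $N:=\sum_{m\ge1}p_m c_m\,\mathbb{Z}G$. By construction $p_n\bar c_n=0$ in $M$, so each $\bar c_n$ is a $p_n$-torsion element; the crux is to show $\bar c_n\neq0$, that is $c_n\notin N$, for every $n$. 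This is where the main obstacle lies: a priori the generators $p_m c_m$ with $m>n$ could conspire to express $c_n$. The device that defeats this is the triangular divisibility built into the rescaling. Consider the projection $\psi_n\colon\mathbb{Z}G\to(\mathbb{Z}G/L_{n-1})\otimes_{\mathbb{Z}}\mathbb{F}_{p_n}$. The generators $p_m c_m$ with $m<n$ die under $\psi_n$ because $c_m\in L_{n-1}$; the generator $p_n c_n$ dies because it lies in $p_n\mathbb{Z}G$; and, crucially, every $c_m$ with $m>n$ lies in $p_n\mathbb{Z}G$ (since $p_n\mid\prod_{k<m}p_k$), so all the ``future'' generators die as well. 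Hence $\psi_n$ factors through $M$, while $\psi_n(c_n)\neq0$ precisely because $\prod_{k<n}p_k$ is a unit modulo $p_n$ and $p_n\notin\Pi_n$; therefore $c_n\notin N$.

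Consequently $M$ is a finitely generated $\mathbb{Z}G$-module with an element of order $p_n$ for every $n$, so its torsion subgroup involves infinitely many primes, contradicting the first property of Hall-good modules. This shows $\mathbb{Q}G$ is right Noetherian, and Corollary B then delivers amenability and the finite generation of every subgroup. I expect the only delicate points, beyond the triangular rescaling, to be the routine verifications that $L_n$ is pure and strictly ascending and that the two stated properties of Hall-good modules hold; the genuine obstacle is the separation $c_n\notin N$, and the divisibility built into the $c_n$ is exactly what is engineered to overcome it.
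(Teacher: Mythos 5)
Your argument is correct, and it follows the same overall strategy as the paper --- contraposition, an ascending chain of right ideals of $\mathbb{Q}G$ intersected with $\mathbb{Z}G$, and the production of a single finitely generated $\mathbb{Z}G$-module with torsion at infinitely many distinct primes --- but you implement the two key steps differently. Where the paper proves a separate lemma (Lemma 4.7: if $A$ is torsion-free and (free abelian)-by-($\pi$-torsion) and $A/B$ is torsion-free, then $B\nleq qA$ for every prime $q\notin\pi$) to guarantee that $J_{n+1}\not\subseteq J_n+q\mathbb{Z}G$ for infinitely many primes $q$, you reach the equivalent conclusion $c'_n\notin L_{n-1}+p_n\mathbb{Z}G$ via a dichotomy on whether some nonzero element of a quotient $\mathbb{Z}G/L_{n-1}$ is divisible by infinitely many primes; your observation that a torsion-free Hall-good module embeds in $F\otimes_{\mathbb{Z}}\mathbb{Z}[\pi^{-1}]$ is a clean substitute for that lemma and does the same work. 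The second difference is the module itself: the paper takes the cyclic submodule of $\prod_n\mathbb{Z}G/(J_n+q_n\mathbb{Z}G)$ generated by $(1+K_1,1+K_2,\dots)$, for which the elements $\mathbf{1}\xi_n$ visibly have order divisible by $q_n$ with no further work, whereas you form the quotient $\mathbb{Z}G/N$ and must pay for it with the triangular rescaling $c_n=(\prod_{k<n}p_k)c'_n$ to ensure $c_n\notin N$; your verification of this via the maps $\psi_n$ is correct, but it is precisely the delicate point that the paper's product construction sidesteps. Both routes end identically: torsion at infinitely many primes in a cyclic module contradicts the Hall property, the antiautomorphism $g\mapsto g^{-1}$ (or the paper's ``dual argument'') supplies the left-Noetherian half, and Corollary B with $S=\mathbb{Q}$ finishes the proof.
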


Virtually solvable groups whose subgroups are all finitely generated must be virtually polycyclic. Hence Theorem 4.5 yields 

\begin{corollary} A virtually solvable group has the Hall property if and only if it is virtually polycyclic. 
\hfill\(\square\)
\end{corollary}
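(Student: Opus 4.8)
The plan is to prove the only substantive assertion, namely that the Hall property forces $\mathbb{Q}G$ to be both right and left Noetherian; once that is in hand, the amenability of $G$ and the finite generation of all its subgroups follow immediately from Corollary B applied with the domain $S=\mathbb{Q}$. I would treat the right-handed statement, the left-handed one being entirely symmetric. The first move is to transfer the Noetherian question from $\mathbb{Q}G$ down to $\mathbb{Z}G$, where the Hall property has something to say. Call a right ideal $\mathfrak{b}$ of $\mathbb{Z}G$ \emph{saturated} if $\mathbb{Z}G/\mathfrak{b}$ is torsion-free as an abelian group. The assignments $\mathfrak{a}\mapsto\mathfrak{a}\cap\mathbb{Z}G$ and $\mathfrak{b}\mapsto\mathbb{Q}\mathfrak{b}$ set up an inclusion-preserving bijection between the right ideals of $\mathbb{Q}G$ and the saturated right ideals of $\mathbb{Z}G$: indeed $\mathbb{Z}G/(\mathfrak{a}\cap\mathbb{Z}G)$ embeds in the $\mathbb{Q}$-vector space $\mathbb{Q}G/\mathfrak{a}$ and hence is torsion-free, while clearing denominators shows $\mathfrak{a}=\mathbb{Q}(\mathfrak{a}\cap\mathbb{Z}G)$. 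Consequently $\mathbb{Q}G$ is right Noetherian if and only if $\mathbb{Z}G$ satisfies the ascending chain condition on saturated right ideals.

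Next I would bring in the Hall property. If $\mathfrak{b}$ is any saturated right ideal, then $\mathbb{Z}G/\mathfrak{b}$ is a finitely generated (indeed cyclic) $\mathbb{Z}G$-module that is torsion-free as an abelian group; since its torsion part must be $\pi$-torsion for some finite $\pi$ and is therefore zero, the Hall property forces its underlying additive group to be free abelian. Thus, along any properly ascending chain $\mathfrak{b}_1\subsetneq\mathfrak{b}_2\subsetneq\cdots$ of saturated right ideals, every quotient $\mathbb{Z}G/\mathfrak{b}_i$ is free abelian, and hence each gap $\mathfrak{b}_{i+1}/\mathfrak{b}_i$, being a nonzero subgroup of the free abelian group $\mathbb{Z}G/\mathfrak{b}_i$, is itself a nonzero free abelian group.

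To finish I would argue by contradiction: assuming such an infinite chain exists, I would manufacture from it a single finitely generated $\mathbb{Z}G$-module whose additive group is \emph{not} an extension of a free abelian group by a $\pi$-torsion group for any finite set of primes $\pi$, thereby contradicting the Hall property. The natural target is a cyclic module $\mathbb{Z}G/\mathfrak{c}$ whose additive torsion subgroup contains elements of infinitely many distinct prime orders; this is forbidden by Hall in the same way that $\mathbb{Q}$ (which fails to be free-abelian-by-$\pi$-torsion for any finite $\pi$) could never arise. The idea is to assign a distinct prime $p_i$ to the $i$-th gap $\mathfrak{b}_{i+1}/\mathfrak{b}_i$ and build $\mathfrak{c}$ so that the nonzero free abelian gap contributes a genuine $p_i$-torsion class to $\mathbb{Z}G/\mathfrak{c}$, for every $i$ simultaneously.

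The hard part, and the step where the freeness supplied by Hall is essential, is carrying out this assembly without collapse. The obvious candidate $\mathfrak{c}=\sum_i p_i\mathfrak{b}_i$ fails: because the ideals are nested and the primes are pairwise coprime, the scaled pieces recombine (for instance $p_1\mathfrak{b}_2+p_2\mathfrak{b}_2=\mathfrak{b}_2$), so that $\mathfrak{c}$ swallows $\bigcup_i\mathfrak{b}_i$ and the quotient becomes torsion-free again. To avoid this I would exploit that each $\mathbb{Z}G/\mathfrak{b}_i$ is free abelian to peel off, at stage $i$, a rank-one ``new direction'' in $\mathfrak{b}_{i+1}$ that is primitive modulo $\mathfrak{b}_i$, and impose the relation $p_i$ only on that direction while leaving the lower ideals untouched. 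The delicate point is that this must be done compatibly with the noncommutative right $\mathbb{Z}G$-action -- divisibility and primitivity are not preserved under right multiplication -- so the real work lies in verifying that the resulting element genuinely has order $p_i$ in $\mathbb{Z}G/\mathfrak{c}$ rather than being absorbed into $\mathfrak{c}$ through module multiplication. Establishing this persistence of each $p_i$-torsion class is the crux of the proof; the left-sided statement is obtained by the mirror-image construction.
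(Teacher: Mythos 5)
Your plan for the substantive direction coincides with the paper's proof of its Theorem 4.5 (the corollary itself is then a two-line deduction from that theorem, Corollary B, and Hall's Theorem 4.4): intersect a strictly ascending chain of right ideals of $\mathbb{Q}G$ with $\mathbb{Z}G$, attach a distinct prime to each gap, and produce a finitely generated $\mathbb{Z}G$-module whose additive torsion involves infinitely many primes. However, there are two genuine gaps. First, your claim that the Hall property forces the additive group of $\mathbb{Z}G/\mathfrak{b}$ to be free abelian for every saturated $\mathfrak{b}$ is false: torsion-free together with (free abelian)-by-($\pi$-torsion) does not imply free abelian. Concretely, for $G=\langle t\rangle\cong\mathbb{Z}$ (polycyclic, hence with the Hall property) the ideal $\mathfrak{b}=(t-2)\mathbb{Z}G$ is saturated and $\mathbb{Z}G/\mathfrak{b}\cong\mathbb{Z}[1/2]$, an extension of $\mathbb{Z}$ by the $2$-torsion Pr\"ufer group that is not free abelian. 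This invalidates the ``primitive rank-one new direction'' device, which presupposes freeness. What does survive is the weaker statement the paper isolates as Lemma 4.7: for the gap $B=\mathfrak{b}_{i+1}/\mathfrak{b}_i$ inside $A=\mathbb{Z}G/\mathfrak{b}_i$ one has $B\nleq qA$ for every prime $q$ outside the finite set $\pi_i$ supplied by the Hall property, and that is still enough to choose pairwise distinct primes $q_i$ with $\mathfrak{b}_{i+1}\not\subseteq\mathfrak{b}_i+q_i\mathbb{Z}G$.

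Second, and more seriously, the assembly of the witnessing module --- which you yourself call ``the crux of the proof'' --- is never carried out, and the obstruction you flag (torsion classes being absorbed under the right $\mathbb{Z}G$-action) is real. The paper sidesteps it by not working inside a single quotient $\mathbb{Z}G/\mathfrak{c}$ at all: setting $K_i:=\mathfrak{b}_i+q_i\mathbb{Z}G$, it takes $M$ to be the cyclic submodule of $\prod_i\mathbb{Z}G/K_i$ generated by the diagonal element $(1+K_1,1+K_2,\dots)$. Choosing $\xi_i\in\mathfrak{b}_{i+1}\setminus K_i$, the element $\mathbf{1}\xi_i$ has nonzero $i$th coordinate killed by $q_i$ and vanishing coordinates beyond the $i$th, so $M$ is a finitely generated module with nonzero $q_i$-torsion for infinitely many distinct primes $q_i$, contradicting the Hall property; since the different primes live in different coordinates, the recombination you worry about cannot occur. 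Finally, the corollary is an ``if and only if'': you omit the converse (virtually polycyclic implies the Hall property, which is Hall's theorem), and the step from ``virtually solvable with all subgroups finitely generated'' to ``virtually polycyclic'' should at least be stated.
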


In order to prove Theorem 4.5, we require the following lemma. 

\begin{lemma}
Let $A$ be a torsion-free abelian group (written additively) which is (free abelian)-by-($\pi$-torsion) for some set of primes $\pi$. Let $B$ be a subgroup such that $A/B$ is torsion-free. Then, for any prime $q\notin \pi$, we have $B\nleq qA$.
\end{lemma}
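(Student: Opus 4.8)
The plan is to prove the statement in its nondegenerate form by establishing the contrapositive: assuming $B\neq 0$ (note that $B=0$ vacuously satisfies $B\le qA$, so some such nontriviality is needed, and in the intended application $B$ is nonzero), I will show that $B\le qA$ forces $B=0$. Write the defining extension as a short exact sequence $0\to F\to A\to T\to 0$ with $F$ free abelian and $T$ a $\pi$-torsion group, and regard $F$ as a subgroup of $A$ with $A/F\cong T$. The one structural fact about $q$ that I will use repeatedly is that, since $q\notin\pi$, multiplication by $q$ is injective on $T$: if $qt=0$, then the order of $t$ divides $q$, but that order is a $\pi$-number and $q\notin\pi$, so $t=0$.

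First I would exploit the torsion-freeness of $A/B$ to show that $B\le qA$ makes $B$ divisible by $q$, i.e. $B=qB$. Given $b\in B\subseteq qA$, write $b=qa$ with $a\in A$ (unique because $A$ is torsion-free); then $q\bar a=0$ in $A/B$, and torsion-freeness of $A/B$ gives $\bar a=0$, so $a\in B$ and $b\in qB$. Combined with the trivial inclusion $qB\subseteq B$, this yields $B=qB$.

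Next I would push this $q$-divisibility down onto $B\cap F$. The key point is that $B\cap F$ is again $q$-divisible: for $b\in B\cap F$, write $b=qb'$ with $b'\in B$; reducing modulo $F$ gives $q\bar{b'}=0$ in $T$, whence $\bar{b'}=0$ by injectivity of multiplication by $q$ on $T$, so $b'\in B\cap F$. But $B\cap F$ is a subgroup of the free abelian group $F$, hence itself free abelian, and a nonzero free abelian group is never $q$-divisible (project onto any $\mathbb Z$-coordinate in a basis). Therefore $B\cap F=0$.

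Finally, $B\cap F=0$ means the composite $B\hookrightarrow A\twoheadrightarrow T$ is injective, so $B$ embeds into the $\pi$-torsion group $T$ and is thus itself $\pi$-torsion; but $B$ is also torsion-free as a subgroup of $A$, forcing $B=0$. The main obstacle, and the crux of the whole argument, is the third step: transferring $q$-divisibility from $B$ to $B\cap F$. This is precisely where the hypothesis $q\notin\pi$ is indispensable, through the injectivity of multiplication by $q$ on the torsion quotient $T$; everything else is formal.
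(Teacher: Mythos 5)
Your proof is correct and follows essentially the same route as the paper's: both pass to a free abelian subgroup $F$ with $A/F$ $\pi$-torsion, use torsion-freeness of $A/B$ to show $qA\cap B=qB$, use $q\notin\pi$ to show $qB\cap F=q(B\cap F)$, and conclude from the fact that a nonzero free abelian group is not $q$-divisible. Your explicit treatment of the degenerate case $B=0$ (for which the statement as written is literally false, and which the paper leaves implicit) is a small but worthwhile addition.
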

\begin{proof}
Let $F$ be a free abelian subgroup of $A$ such that $A/F$ is $\pi$-torsion. 
Then $B\cap F$ is a free abelian subgroup of $B$, and $B/B\cap F$ is $\pi$-torsion.
For any prime $q\notin \pi$, we have $qB\cap F=q(B\cap F)\neq B\cap F$. Therefore, since $qA\cap B=qB$, we conclude $qA\cap B\neq B$.
\end{proof}

\begin{proof}[Proof of Theorem 4.5] 
Suppose that $\mathbb QG$ is not right Noetherian. Then there is a strictly ascending chain $I_1<I_2<I_3<\dots$ of right ideals in $\mathbb Q G$. 
Set $J_n:=I_n\cap \mathbb Z G$. Since $G$ satisfies the Hall property, $\mathbb ZG/J_n$ is (free abelian)-by-($\pi_n$-torsion) for some finite set of primes $\pi_n$. Applying Lemma 4.7 with  $A:=\mathbb Z G/J_{n}$ and $B:=J_{n+1}/J_n$, we deduce that, for each $n\in \mathbb N$,  there are infinitely many primes $q$ such that $J_{n+1}\not\subseteq J_n+q\mathbb Z G$. As a result, we can inductively define a sequence of \emph{distinct} primes $q_n$ such that  $J_{n+1}\not\subseteq J_n+q_n\mathbb Z G$ for all $n\in \mathbb N$.

For each $n\in \mathbb N$, choose $\xi_n \in J_{n+1}$ so that $\xi_n\notin J_n+q_n\mathbb Z G$. Writing $K_n:=J_n+q_n\mathbb Z G$, take $M$ to be the submodule of $\displaystyle{\prod_{n=1}^{\infty}({\mathbb Z G}/{K_n})}$ generated by the sequence $\mathbf{1}:=(1+K_1,1+K_2,1+K_3,\dots)$. Then the $n$th entry of $\mathbf{1}\xi_n$ is non-zero and annihilated by multiplication by $q_n$, whereas the $(n+1)$st and all subsequent entries are zero. We infer that $M$ has elements of order $q_n$ for all $n$ and so is witness to the failure of the Hall property, a contradiction. Therefore $\mathbb QG$ must be right Noetherian. That $\mathbb QG$ is also left Noetherian may be shown by a dual argument. 
\end{proof}

\end{document}